\documentclass[11pt,reqno]{amsart}

\usepackage[T1]{fontenc}
\usepackage{lmodern}
\usepackage{microtype}
\usepackage{amsmath,amssymb,mathtools}
\usepackage{amsthm}
\usepackage{aliascnt}
\usepackage{booktabs}
\usepackage{array}
\usepackage{enumitem}
\usepackage{tikz}
\usepackage{float}
\usepackage{hyperref}
\usepackage[capitalise,noabbrev]{cleveref}
\usepackage{xcolor}

\hypersetup{
  colorlinks=true,
  linkcolor=blue!55!black,
  citecolor=green!45!black,
  urlcolor=blue!55!black,
  pdftitle={A Sharp Curvature Threshold for GLMY Path Homology},
  pdfauthor={Shuliang Bai, Jingyan Li, and Shing-Tung Yau}
}

\newtheorem{theorem}{Theorem}[section]

\newaliascnt{proposition}{theorem}
\newtheorem{proposition}[proposition]{Proposition}
\aliascntresetthe{proposition}

\newaliascnt{lemma}{theorem}
\newtheorem{lemma}[lemma]{Lemma}
\aliascntresetthe{lemma}

\newaliascnt{corollary}{theorem}
\newtheorem{corollary}[corollary]{Corollary}
\aliascntresetthe{corollary}

\newaliascnt{conjecture}{theorem}

\aliascntresetthe{conjecture}

\newaliascnt{problem}{theorem}
\newtheorem{problem}[problem]{Problem}
\aliascntresetthe{problem}

\theoremstyle{definition}
\newaliascnt{definition}{theorem}
\newtheorem{definition}[definition]{Definition}
\aliascntresetthe{definition}

\newaliascnt{example}{theorem}

\aliascntresetthe{example}

\theoremstyle{remark}
\newaliascnt{remark}{theorem}
\newtheorem{remark}[remark]{Remark}
\aliascntresetthe{remark}

\crefname{theorem}{Theorem}{Theorems}
\crefname{proposition}{Proposition}{Propositions}
\crefname{lemma}{Lemma}{Lemmas}
\crefname{corollary}{Corollary}{Corollaries}
\crefname{conjecture}{Conjecture}{Conjectures}
\crefname{problem}{Problem}{Problems}
\crefname{definition}{Definition}{Definitions}
\crefname{example}{Example}{Examples}
\crefname{remark}{Remark}{Remarks}

\newcommand{\F}{\mathbb F}
\newcommand{\R}{\mathbb R}
\newcommand{\Z}{\mathbb Z}
\newcommand{\PathH}{H^{\mathrm{GLMY}}}
\newcommand{\cC}{\mathcal C}
\newcommand{\cR}{\mathcal R}
\newcommand{\cA}{\mathcal A}
\newcommand{\Om}{\Omega}
\newcommand{\diam}{\operatorname{diam}}

\newcommand{\Lip}{\operatorname{Lip}}
\newcommand{\supp}{\operatorname{supp}}
\newcommand{\av}{\operatorname{av}}

\newcommand{\Xshort}[1]{X_{\leq #1}}

\title[A Sharp Curvature Threshold for GLMY Path Homology]{A Sharp Curvature Threshold for GLMY Path Homology}
\author[Shuliang Bai, Jingyan Li, and Shing-Tung Yau]{%
Shuliang Bai$^{1}$\quad Jingyan Li$^{1,*}$\quad Shing-Tung Yau$^{1,2}$
\\[6pt]
$^{1}$Beijing Institute of Mathematical Sciences and Applications (BIMSA),
Beijing 101408, China
\\
$^{2}$Yau Mathematical Sciences Center, Tsinghua University,
Beijing 100084, China
\\
$^{*}$Corresponding author:
\href{mailto:jingyanli@bimsa.cn}{jingyanli@bimsa.cn}}
\date{}

\subjclass[2020]{Primary 05C81, 55N35; Secondary 05C12, 05C38, 53C21}
\keywords{Lin--Lu--Yau curvature, GLMY path homology, short-cycle complex,
graph covering, Cartesian product}

\begin{document}

\begin{abstract}
Let $G$ be a finite simple graph with at least one edge.  We prove the sharp
vanishing theorem
\[
\kappa_{\min}^{\mathrm{LLY}}(G)>\frac12
\quad\Longrightarrow\quad
\PathH_1(G;\R)=0.
\]
Equivalently, nonzero first GLMY path homology forces an edge of
Lin--Lu--Yau curvature at most $1/2$.  The threshold $1/2$ is sharp and is
attained by $C_5$.  The proof combines the cycle-space description of first
GLMY path homology with the limit-free Laplacian characterization of
Lin--Lu--Yau curvature.  As a secondary consequence of the
curvature-preserving universal-cover method, we prove that if $G$ is connected
and $\kappa_{\min}^{\mathrm{LLY}}(G)>0$, then
$\pi_1(\Xshort{5}(G),o)$ is finite, where
$\Xshort{5}(G)$ is obtained by filling every simple cycle of length at most
five.  Equivalently, the normal subgroup generated by based simple
$5$-cycle loops has finite index in $\pi_1^{\mathrm{GLMY}}(G,o)$.  In higher
degrees the situation is different: for each integer $r\geq1$, the Cartesian
product $T_r=C_5^{\square r}$ has curvature $1/(2r)$ on every edge and, for
every field $\F$,
\[
\PathH_p(T_r;\F)\cong\F^{\binom rp}\qquad(0\leq p\leq r),
\]
so strict positivity of Lin--Lu--Yau curvature does not force
higher-dimensional GLMY path homology to vanish.
\end{abstract}

\maketitle

\section{Introduction}

GLMY path homology of digraphs was developed by Grigor'yan, Lin, Muranov, and
Yau; we use the formulation in their published account
\cite{GrigoryanLinMuranovYau2020}.  Its homotopy invariance was proved
in the homotopy theory for digraphs developed by the same authors
\cite{GrigoryanLinMuranovYau2014}.  Grigor'yan, Muranov, and Yau proved
K\"unneth formulas for GLMY path homology under Cartesian products and joins of digraphs
\cite{GrigoryanMuranovYau2017}.  We regard an undirected graph as the
bidirected digraph obtained by replacing each edge by the two opposite
arrows.  For such graphs, Kempton, M\"unch, and Yau identified first GLMY path
homology with the ordinary cycle space modulo the subspace generated
by simple $3$- and $4$-cycles \cite{KemptonMunchYau2021}.

Ollivier introduced coarse Ricci curvature for Markov chains on metric
spaces \cite{Ollivier2009}.  Lin, Lu, and Yau introduced the graph curvature
used here as a modification of Ollivier's construction
\cite{LinLuYau2011}.  Kempton, M\"unch, and Yau proved that strictly
positive Bakry--\'Emery curvature forces first GLMY path homology to vanish
\cite{KemptonMunchYau2021}; the analogous implication for positive
Lin--Lu--Yau curvature is false.  Indeed,
\[
\kappa_{\mathrm{LLY}}(C_5)=\frac12,
\qquad
\PathH_1(C_5;\R)\cong\R.
\]
Thus strict positivity alone is insufficient.  Our main result determines
the sharp Lin--Lu--Yau curvature threshold for the vanishing of first GLMY
path homology.

Recent work has established rigidity, classification, curvature formulas,
and order bounds for positively curved graphs under various structural
assumptions \cite{Hehl2025,Hehl2026,LinYou2026}.  Particularly
relevant here, Hehl and M\"unch obtained Betti-number estimates for a
curvature-adapted two-dimensional CW complex and developed a universal-cover
mechanism that preserves the local metric data entering the curvature
\cite{HehlMunch2025}.  We use that mechanism in
Section~\ref{sec:short} to derive a fundamental-group consequence for an
explicit short-cycle filling complex.  The sharp one-half theorem itself is
proved by a different argument, based on the interaction between short-cycle
relations and the variational formula for Lin--Lu--Yau curvature.

\begin{theorem}[Sharp one-half vanishing theorem]\label{thm:half-vanishing}
Let $G$ be a finite simple graph with at least one edge.  If
\[
\kappa_{\min}^{\mathrm{LLY}}(G)>\frac12,
\]
then
\[
\PathH_1(G;\R)=0.
\]
Equivalently, if $\PathH_1(G;\R)\neq0$, then there exists
$\{x,y\}\in E(G)$ such that
\[
\kappa_{\mathrm{LLY}}(x,y)\leq\frac12.
\]
The constant $1/2$ is optimal.
\end{theorem}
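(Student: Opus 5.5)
Our plan is to combine the Kempton--M\"unch--Yau description of first GLMY path homology with a local analysis of optimal transport plans. By \cite{KemptonMunchYau2021}, $\PathH_1(G;\R)$ is the cycle space $Z_1(G;\R)$ modulo the span of simple $3$- and $4$-cycles. It therefore suffices to show that, when $\kappa_{\min}^{\mathrm{LLY}}(G)>1/2$, every cycle is a real combination of triangles and quadrilaterals. We argue componentwise and by induction on cycle length. Let $C=(v_0,v_1,\dots,v_{n-1})$ be a cycle in $G$ which is shortest among cycles not in the span $S$ of short cycles. Then $C$ is isometrically embedded: a chord or a shorter geodesic shortcut would split $C$ into two shorter cycles, each in $S$. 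We may also assume $n\ge5$.

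The key step uses the limit-free characterization of Lin--Lu--Yau curvature, which we take as stated earlier in the paper (the M\"unch--Wojciechowski formula):
\[
\kappa(x,y)=\inf\{\nabla_{yx}\Delta f : f\in \Lip(1),\ \nabla_{yx}f=1\},
\qquad
\Delta f(x)=\frac{1}{\deg x}\sum_{z\sim x}(f(z)-f(x)).
\]
For an edge $xy=v_0v_1$ of $C$, we take $f$ to be a $1$-Lipschitz function adapted to the geometry of $C$. The natural choice is a truncated signed distance, for example $f(z)=\min(d(z,v_0),\,k)$ or its reflection about $xy$ along $C$, with $f(v_1)-f(v_0)=1$. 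We then compute $\nabla_{yx}\Delta f$. Neighbours of $x$ that do not lie on a triangle or $4$-cycle through $xy$ contribute a deficit. Curvature above $1/2$ should then force most neighbours of $x$ to be matched to neighbours of $y$ at distance at most $1$; such a matching is exactly a triangle or a $4$-cycle through the edge. Applying this at the edges of $C$, we use the resulting short cycles to homotope $C$, modulo $S$, into a shorter cycle. This contradicts minimality.

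An alternative route to the contradiction would be a Bonnet--Myers-type diameter bound: $\kappa>1/2$ gives $\diam\le 2/\kappa<4$. However, isometric cycles of length $n\ge 8$ have diameter at least $4$, and this bound alone does not exclude lengths $5,6,7$. Those cases are handled by the local argument, which shows that the neighbour $v_{n-1}$ of $v_0$ on $C$ must be transported, at cost at most $1$ in an optimal plan, to a neighbour of $v_1$. This yields a vertex $w$ adjacent to both $v_{n-1}$ and $v_1$, or a $4$-cycle through them, which in turn produces a shortcut.

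The main obstacle is making this local transport argument quantitative: we must show that the curvature bound $1/2$ suffices, and not merely some larger constant. We would first test the $f$ computation on $C_5$, where $\kappa=1/2$ exactly, and aim for the precise inequality $\kappa(v_0,v_1)\le 1/2$ whenever no triangle or $4$-cycle allows $C$ to be shortened at that edge. Optimality of the constant is immediate: $C_5$ has $\kappa_{\mathrm{LLY}}=1/2$ on every edge and $\PathH_1(C_5;\R)\cong\R$, since $C_5$ contains no $3$- or $4$-cycles.
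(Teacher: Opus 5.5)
\paragraph{Verdict.} The proposal has a genuine gap: the step that carries the content of the theorem, exhibiting an edge with $\kappa\le\frac12$, is never carried out, and the mechanisms you sketch for it do not work.

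\paragraph{What is sound, and what fails.} Your reduction is correct as far as it goes. A shortest cycle $C$ outside the span $S$ of $3$- and $4$-cycles is isometric, and Bonnet--Myers leaves only $5\le n\le 7$. Everything after that is heuristic.
\begin{itemize}
\item The truncated distance $h=\min(d(\cdot,v_0),k)$ gives $\Delta h(x)-\Delta h(y)=2-\av_y h$. This equals $1$ on $C_5$, so it cannot reach the sharp case. An extremal function there is signed: $h(v_4)=-\frac12$, $h(v_0)=0$, $h(v_1)=1$, $h(v_2)=\frac32$. The naive signed distance along $C$ (values $-1$ and $2$) is not $1$-Lipschitz on $C_5$.
\item Your transport claim is vacuous. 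The vertex $v_0$ is itself a neighbour of $v_1$ at distance $1$ from $v_{n-1}$, so sending $v_{n-1}$ there produces no new cycle.
\item A genuine common neighbour $w\ne v_0$ of $v_{n-1}$ and $v_1$ does not help either. It only replaces $v_{n-1}v_0v_1$ by $v_{n-1}wv_1$. This gives a cycle of the same length $n$, still outside $S$, so there is no shortcut and no contradiction with minimality.
\item Minor: the M\"unch--Wojciechowski formula reads $\kappa(x,y)=\inf\{\nabla_{xy}\Delta f:\nabla_{yx}f=1\}$, that is, $\Delta f(x)-\Delta f(y)$. Your version has the sign reversed.
\end{itemize}

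\paragraph{The missing ideas.} Two ingredients are needed. First, replace the integral cycle by a real circulation adapted to the short-cycle relations. Second, choose the test edge by a maximum principle.

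The paper takes $0\ne\omega\in Z_1\cap B_{\le4}^{\perp}$, normalised so that $\max|\omega_{uv}|=1=\omega_{xy}$.
\begin{itemize}
\item Orthogonality to triangles through $x$ or $y$, and to $4$-cycles $xuvy$ for cross edges, makes $\omega$ exact on $N(x)\cup N(y)$. This yields a potential $F$ with $F(x)=0$, $F(y)=1$, and, by maximality, $F\in[-1,2]$.
\item Halving the parts of $F$ outside $[0,1]$ makes it $1$-Lipschitz, and a McShane extension makes it admissible.
\item The divergence-free condition at $x$ and at $y$ then gives $\Delta\widetilde f(x)-\Delta\widetilde f(y)\le\frac12$.
\end{itemize}

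Your cycle vector $\mathbf c_C$ satisfies $|\omega|\le1$ for free, but it is not exact near $xy$. For example, if $xyz$ is a triangle with $z\notin C$, integrating from $x$ gives $F(z)=0$ while integrating from $y$ gives $F(z)=1$. Projecting $\mathbf c_C$ onto $B_{\le4}^{\perp}$ restores exactness. However, after normalising at an edge of $C$, the neighbouring values may exceed $1$ in absolute value, so $F$ leaves $[-1,2]$ and the compression fails. That is why the edge must be one where $|\omega|$ is maximal, which in general is not on any prescribed shortest cycle.

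Your sharpness argument via $C_5$ is fine.
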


The formulation over $\R$ is convenient because the proof selects an
orthogonal representative in the real cycle space.  The conclusion in fact
holds over every field of characteristic zero.  Indeed, the incidence matrix
and the matrices of the $3$- and $4$-cycle relations have integer
entries, so their ranks over any characteristic-zero field agree with their
ranks over $\mathbb Q$ and $\R$.  Thus nonvanishing over a characteristic-zero field
would imply nonvanishing over $\R$, contradicting the theorem.  This argument
does not address positive characteristic, where the rank of the short-cycle
relation matrix may drop.

The proof selects a nonzero circulation in the cycle space that is orthogonal
to every $3$- and $4$-cycle vector.  At an edge on which
this circulation has maximal absolute value, the short-cycle relations define
a local potential on the union of the two endpoint neighborhoods.  After a
piecewise-linear contraction and a McShane extension, this potential is an
admissible test function for the Laplacian characterization of
Lin--Lu--Yau curvature.  The resulting estimate is at most $1/2$.

As a secondary topological consequence, let $\Xshort{5}(G)$ be the
two-dimensional CW complex obtained from $G$ by attaching one $2$-cell along
each simple cycle of length $3$, $4$, or $5$.  Specializing the local metric
preservation mechanism of Hehl and M\"unch \cite{HehlMunch2025} to this
explicit filling complex gives the following fundamental-group bound.  If
$G$ is finite and connected, $\kappa_{\min}(G)>0$, $n=|V(G)|$, $\Delta$ is
the maximum degree, and
\[
R=\left\lfloor\frac{2}{\kappa_{\min}(G)}\right\rfloor,
\]
where $\lfloor t\rfloor$ denotes the greatest integer not exceeding $t$, then,
for every base vertex $o$,
\[
|\pi_1(\Xshort{5}(G),o)|
\leq
\frac{1+\Delta\sum_{j=0}^{R-1}(\Delta-1)^j}{n}.
\]
In particular,
$\pi_1(\Xshort{5}(G),o)$ is finite.  Equivalently, for every base vertex $o$,
the quotient
\[
\pi_1^{\mathrm{GLMY}}(G,o)/N_5(G,o)
\]
is finite with the same bound.

Positive curvature alone does not imply vanishing in higher degree.  For each
integer $r\geq1$, let
\[
T_r=C_5^{\square r},
\]
the Cartesian product of $r$ copies of $C_5$.  Every edge of $T_r$ has
curvature $1/(2r)$, and the K\"unneth formula for GLMY path homology
gives
\[
\PathH_p(T_r;\F)\cong\F^{\binom rp}
\qquad(0\leq p\leq r).
\]
For every $p\geq1$, taking $r=p$ gives
\[
\PathH_p(T_p;\F)\cong\F\neq0,
\qquad
\kappa_{\min}^{\mathrm{LLY}}(T_p)=\frac{1}{2p}.
\]
Hence the extremal quantity
defined in \cref{sec:problems} satisfies
$K_p^{\mathrm{LLY}}(\F)\geq1/(2p)$.

The remainder of the paper is organized as follows.  Section~\ref{sec:prelim}
fixes notation and recalls the cycle-space description of first GLMY path homology.
Section~\ref{sec:half} proves the sharp one-half vanishing theorem.
Section~\ref{sec:short} records a short-cycle fundamental-group consequence
of the curvature-preserving universal-cover method.
Section~\ref{sec:examples-extremal} collects the sharp five-cycle, a positively
curved seven-vertex example with nonabelian GLMY fundamental group, and
Cartesian-product examples with nonzero higher-dimensional GLMY path homology, and
then formulates the resulting extremal curvature problem.

\section{Preliminaries}\label{sec:prelim}

Throughout, graphs are simple and locally finite.  Graphs whose homology is
computed are finite; covering graphs may be infinite.  Unless otherwise specified,
$\F$ is a field.  For an undirected graph $G$, an edge is an unordered pair
$\{x,y\}\in E(G)$; we write $x\sim y$ for adjacency.  Ordered pairs $(x,y)$
are used only for directed edges or for a chosen orientation of an undirected
edge.  The girth of an acyclic graph is taken to be $+\infty$.
We write a simple cycle with cyclically ordered vertices
$v_0,\ldots,v_{m-1}$ as $(v_0,\ldots,v_{m-1})$; the initial vertex is not
repeated.  By contrast, a closed walk is written with its initial vertex
repeated at the end.

\subsection{GLMY path homology}

Let $V$ be a finite set.  An elementary $p$-path is a symbol
\[
e_{i_0\ldots i_p},\qquad i_0,\ldots,i_p\in V.
\]
Let $\Lambda_p(V;\F)$ be the vector space spanned by all elementary $p$-paths.
Its boundary operator is defined on elementary paths by
\begin{equation}\label{eq:path-boundary}
\partial e_{i_0\ldots i_p}
=\sum_{q=0}^{p}(-1)^q e_{i_0\ldots\widehat{i_q}\ldots i_p}.
\end{equation}
An elementary path is \emph{regular} if $i_{q-1}\neq i_q$ for every
$q=1,\ldots,p$.  After declaring every nonregular elementary path to be zero,
one obtains the regular path space $\cR_p(V;\F)$; the operator \eqref{eq:path-boundary}
descends to
\[
\partial:\cR_p(V;\F)\longrightarrow\cR_{p-1}(V;\F).
\]

Let $D=(V,E_D)$ be a finite digraph, where $E_D\subseteq V\times V$ is its
set of directed edges.  A regular elementary path $e_{i_0\ldots i_p}$ is
\emph{allowed} if
\[
(i_{q-1},i_q)\in E_D
\qquad(1\leq q\leq p).
\]
Let $\cA_p(D;\F)\subseteq\cR_p(V;\F)$ be the span of the allowed elementary
$p$-paths.

\begin{definition}[GLMY path chain complex]
The space of boundary-invariant allowed $p$-paths is
\[
\Om_p(D;\F)
=\{u\in\cA_p(D;\F):\partial u\in\cA_{p-1}(D;\F)\}.
\]
Then $\partial\Om_p(D;\F)\subseteq\Om_{p-1}(D;\F)$, and the $p$-th GLMY path
homology group is
\[
\PathH_p(D;\F)
=\frac{\ker\bigl(\partial:\Om_p(D;\F)\to\Om_{p-1}(D;\F)\bigr)}
{\operatorname{im}\bigl(\partial:\Om_{p+1}(D;\F)\to\Om_p(D;\F)\bigr)}.
\]
Its dimension is denoted by $\beta_p^{\mathrm{GLMY}}(D;\F)$.
\end{definition}

For an undirected graph $G$, let $\overleftrightarrow{G}$ be the bidirected
digraph obtained by replacing each unordered edge $\{x,y\}\in E(G)$ by the
two directed edges $(x,y)$ and $(y,x)$.

\begin{definition}[GLMY path homology of an undirected graph]
We set
\[
\PathH_p(G;\F):=\PathH_p(\overleftrightarrow{G};\F).
\]
\end{definition}

We use the regular-path formulation and notation of Grigor'yan, Lin, Muranov,
and Yau \cite{GrigoryanLinMuranovYau2020}.

\subsection{Cycle spaces and first GLMY path homology}

Let $G$ be a finite undirected graph.  Define $C_0(G;\F)$ to be the vector
space with basis $\{e_v:v\in V(G)\}$.  For each unordered edge
$\{u,v\}\in E(G)$, choose exactly one of the two ordered pairs $(u,v)$ or
$(v,u)$, and denote the resulting set of chosen orientations by $E^+(G)$.
Thus $E^+(G)$ contains exactly one ordered pair for each edge of $G$.
For $(u,v)\in E^+(G)$, let $e_{uv}$ denote the corresponding elementary
$1$-path, and define
\[
C_1(G;\F)
:=\operatorname{span}_{\F}\{e_{uv}:(u,v)\in E^+(G)\}.
\]
The incidence boundary is
\[
\partial_G:C_1(G;\F)\longrightarrow C_0(G;\F),
\qquad
\partial_G e_{uv}=e_v-e_u.
\]
The cycle space of $G$ is
\[
\cC(G;\F):=\ker\partial_G.
\]
If $c(G)$ denotes the number of connected components of $G$, then
\[
\dim\cC(G;\F)=|E(G)|-|V(G)|+c(G).
\]

Let $C=(v_0,v_1,\ldots,v_{m-1})$ be a simple cycle with a chosen cyclic
orientation.  Its oriented cycle vector $\mathbf c_C\in\cC(G;\F)$ is obtained
by traversing $C$ in that orientation and, for each traversed edge
$v_i v_{i+1}$, using coefficient $+1$ on the chosen basis vector if
$(v_i,v_{i+1})\in E^+(G)$ and coefficient $-1$ on the chosen basis vector if
$(v_{i+1},v_i)\in E^+(G)$, where indices are taken modulo $m$.  Reversing the cyclic orientation replaces
$\mathbf c_C$ by $-\mathbf c_C$.

For $j\geq3$, let $\cC_j(G;\F)$ be the subspace of $\cC(G;\F)$ spanned by
the vectors $\mathbf c_C$ over all simple $j$-cycles $C$.  For $k\geq3$, set
\[
\cC_{\leq k}(G;\F):=\sum_{j=3}^{k}\cC_j(G;\F).
\]
Thus $\cC_{\leq k}(G;\F)$ is the subspace of the cycle space generated by
simple cycles of length at most $k$.

The following description of first GLMY path homology is due to Kempton,
M\"unch, and Yau \cite{KemptonMunchYau2021}.

\begin{lemma}[Cycle-space presentation of $\PathH_1$]\label{lem:H1-cycle-space}
For a finite undirected graph $G$ and a field $\F$, there is a natural
isomorphism
\[
\PathH_1(G;\F)
\cong
\cC(G;\F)/\cC_{\leq4}(G;\F).
\]
Equivalently, in degree one the regular path boundaries impose precisely the
cycle relations generated by simple $3$- and $4$-cycles.
\end{lemma}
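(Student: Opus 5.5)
We prove the isomorphism by computing the low-degree pieces of the GLMY chain complex of $\overleftrightarrow{G}$ explicitly.

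\emph{Degrees $0$ and $1$.} Every allowed $0$- or $1$-path is boundary-invariant, so
\[
\Om_0=C_0(G;\F),\qquad \Om_1=\cA_1(\overleftrightarrow G;\F),
\]
with basis $e_{uv}$ over all ordered adjacent pairs.

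The subspace spanned by the symmetric sums $e_{uv}+e_{vu}$ consists of boundaries. Indeed, $e_{uvu}$ is allowed and $\partial e_{uvu}=e_{vu}-e_{uu}+e_{uv}=e_{uv}+e_{vu}$, since $e_{uu}=0$ in the regular complex. Hence $e_{uvu}\in\Om_2$.

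We then fold each $e_{vu}$ to $-e_{uv}$ for $(u,v)\in E^+(G)$. This identifies $\Om_1/\langle e_{uv}+e_{vu}\rangle$ with $C_1(G;\F)$, and the path boundary with the incidence boundary $\partial_G$. Therefore
\[
\ker\partial\big/\langle e_{uv}+e_{vu}\rangle\;\cong\;\cC(G;\F).
\]
The remaining task is to identify the image of $\partial(\Om_2)$ in $\cC(G;\F)$ with $\cC_{\leq4}(G;\F)$.

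\emph{Inclusion $\cC_{\leq4}\subseteq\operatorname{im}\partial$.}
\begin{itemize}[leftmargin=*]
\item For a triangle $(x,y,z)$, the path $e_{xyz}$ is allowed with $\partial e_{xyz}=e_{yz}-e_{xz}+e_{xy}$, which is allowed. Its class is the triangle's cycle vector.
\item For a $4$-cycle $(x,y,z,w)$, the difference $e_{xyz}-e_{xwz}$ has boundary $e_{yz}+e_{xy}-e_{wz}-e_{xw}$. The nonallowed term $e_{xz}$ cancels, so the difference lies in $\Om_2$. Its boundary is the $4$-cycle vector.
\end{itemize}

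\emph{Reverse inclusion.} This is the main step: $\partial(\Om_2)$ is contained in the span of the symmetric sums and the short cycle vectors. We decompose $\cA_2$ by endpoint pairs $(a,c)$. An allowed $2$-path $e_{abc}$ contributes a nonallowed boundary term $e_{ac}$ exactly when $a\neq c$ and $a\not\sim c$.
\begin{itemize}[leftmargin=*]
\item Paths with $a=c$ are of the form $e_{aba}$, already handled.
\item Paths with $a\sim c$ lie in $\Om_2$ individually. Each is a triangle, and its boundary is a triangle cycle vector.
\end{itemize}
For $a\neq c$ with $a\not\sim c$, an element $\sum_b \lambda_b e_{abc}$ lies in $\Om_2$ only if $\sum_b\lambda_b=0$. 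Such a combination is spanned by differences $e_{abc}-e_{ab'c}$ with $b\neq b'$. The walk $a\,b\,c\,b'\,a$ is a simple $4$-cycle: the vertices $a,c$ are distinct and nonadjacent, and $b\neq b'$. Hence each such difference has boundary equal to a $4$-cycle vector.

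Since $\Om_2$ is the direct sum of these pieces by endpoint pair (the obstruction to boundary-invariance splits by the pair $(a,c)$), the image of $\partial(\Om_2)$ is exactly
\[
\langle e_{uv}+e_{vu}\rangle+\cC_3(G;\F)+\cC_4(G;\F).
\]
Passing to the quotient gives
\[
\PathH_1(G;\F)\cong\cC(G;\F)/\cC_{\leq4}(G;\F).
\]
Naturality follows because every identification above is induced by maps on chains. The only delicate point is checking that the decomposition by endpoint pair is compatible with the boundary-invariance condition; this holds because the nonallowed terms $e_{ac}$ are indexed by that pair.
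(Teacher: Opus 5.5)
Your proof is correct and complete. The endpoint-pair splitting of the boundary-invariance condition shows that $\Om_2$ is spanned by double edges $e_{aba}$, triangles, and squares $e_{abc}-e_{ab'c}$, and folding modulo the double-edge boundaries then gives exactly $\cC(G;\F)/\cC_{\leq4}(G;\F)$ over any field.

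The paper does not prove this lemma; it imports it from Kempton--M\"unch--Yau. Your argument is essentially the standard computation behind that result, written out self-containedly. One small wording point: if the $4$-cycle $(x,y,z,w)$ has the chord $xz$, then $e_{xz}$ is allowed. In that case $e_{xyz}-e_{xwz}\in\Om_2$ holds because each term is already in $\Om_2$, not because $e_{xz}$ cancels, and the conclusion is unchanged.
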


\subsection{Short-cycle filling complexes}

For an integer $k\geq3$, define $\Xshort{k}(G)$ to be the two-dimensional
CW complex obtained from the one-skeleton $G$ by attaching one $2$-cell along
every simple cycle of length at most $k$.  Reversing the orientation of an
attaching map changes the corresponding cellular boundary by a sign and hence
does not change the subspace generated by the $2$-cell boundaries.

For later use, let $G$ be a connected graph with base vertex $o$.  We denote by
\[
\pi_1^{\mathrm{GLMY}}(G,o)
\]
the fundamental group of $G$ in the GLMY homotopy theory.  A based edge path
is an edge path that starts and ends at $o$, and
$\pi_1^{\mathrm{GLMY}}(G,o)$ is the group of equivalence classes of such
paths under the GLMY edge-path relations.  For graphs, the edge paths, their
local equivalence relations, and the resulting fundamental groupoid are given
in \cite[Definitions~5.2--5.3 and Theorem~5.4]{GrigoryanJimenezMuranov2018};
the vertex group at $o$ is naturally isomorphic to the graph fundamental group.
Equivalently, via the associated symmetric digraph, this is the
$C$-homotopy fundamental group introduced in
\cite[Theorem~4.20]{GrigoryanLinMuranovYau2014}.

The superscript $\mathrm{GLMY}$ is used only to distinguish this graph
fundamental group from the ordinary topological fundamental groups of the CW
complexes $\Xshort{k}(G)$.  Kempton, M\"unch, and Yau proved that the GLMY
fundamental group of an undirected graph is naturally isomorphic to the
ordinary fundamental group of the two-dimensional complex obtained by
filling every simple $3$- and $4$-cycle
\cite[Corollary~7.4]{KemptonMunchYau2021}.  In the notation of this paper,
\begin{equation}\label{eq:GLMY-pi1-X4}
\pi_1^{\mathrm{GLMY}}(G,o)
\cong
\pi_1(\Xshort{4}(G),o).
\end{equation}

We shall also use the relation between this fundamental group and first path
homology.  In the next formula, GLMY path homology is taken with integral
coefficients, using the same regular path chain construction over $\Z$.
For every connected graph, the GLMY abelianization theorem gives
\begin{equation}\label{eq:GLMY-abelianization}
\bigl(\pi_1^{\mathrm{GLMY}}(G,o)\bigr)_{\mathrm{ab}}
\cong
\PathH_1(G;\Z),
\end{equation}
where GLMY path homology is computed on the associated symmetric digraph
\cite[Theorem~4.23]{GrigoryanLinMuranovYau2014}.  The same relation also follows from
\cite[Proposition~7.7]{KemptonMunchYau2021} together with
\cref{lem:H1-cycle-space}.  Since the integral path chain groups are free
abelian, extension of scalars gives
\[
\PathH_1(G;\Z)\otimes_{\Z}\R
\cong
\PathH_1(G;\R),
\]
and hence
\[
\bigl(\pi_1^{\mathrm{GLMY}}(G,o)\bigr)_{\mathrm{ab}}
\otimes_{\Z}\R
\cong
\PathH_1(G;\R).
\]

Choose a spanning tree $T$ of $G$.  There is one standard generator for each
edge outside $T$: follow $T$ from $o$ to one endpoint, traverse the non-tree
edge, and return to $o$ along $T$.  Each $3$- or $4$-cycle $2$-cell
contributes the corresponding boundary word after the tree edges are set equal
to the identity.

For each simple $5$-cycle $C$, choose a vertex of $C$, one of the two cyclic
orientations of $C$, and the path in $T$ from $o$ to that vertex.  Follow the
tree path, traverse $C$ in the chosen orientation, and return along the inverse
tree path.  Let
\[
N_5(G,o)\triangleleft\pi_1^{\mathrm{GLMY}}(G,o)
\]
be the normal closure of these based $5$-cycle loops.  Reversing the cyclic
orientation replaces a generator by its inverse, while changing the chosen
vertex or the path from $o$ to $C$ conjugates the corresponding loop.  Hence
$N_5(G,o)$ is independent of all these choices.
Attaching the $5$-cycle $2$-cells gives
\begin{equation}\label{eq:pentagonal-normal-quotient}
\pi_1^{\mathrm{GLMY}}(G,o)/N_5(G,o)
\cong\pi_1(\Xshort{5}(G),o).
\end{equation}

\subsection{Lin--Lu--Yau curvature}

Let $G=(V,E)$ be a connected locally finite undirected graph, and let $d$ be
its graph distance.  For a vertex $x\in V$ of positive degree, define its open
and closed neighborhoods by
\[
N(x):=\{z\in V:z\sim x\},
\qquad
N[x]:=N(x)\cup\{x\},
\]
and write $d_x:=|N(x)|$ for the degree of $x$.

For an idleness parameter $\alpha\in[0,1]$, the lazy random-walk probability
measure centered at $x$ is the function $\mu_x^\alpha:V\to[0,1]$ defined by
\begin{equation}\label{eq:lazy-measure}
\mu_x^\alpha(z)=
\begin{cases}
\alpha,&z=x,\\
(1-\alpha)/d_x,&z\sim x,\\
0,&\text{otherwise}.
\end{cases}
\end{equation}
Thus $\mu_x^\alpha$ places mass $\alpha$ at $x$ and distributes the remaining
mass uniformly among the neighbors of $x$.

To compare the measures centered at two vertices, first recall the
$1$-Wasserstein distance for arbitrary finitely supported probability measures
on $V$.  Such a probability measure is a function $\mu:V\to[0,1]$ with
$\sum_{u\in V}\mu(u)=1$ and finite support, where
\[
\supp\mu:=\{u\in V:\mu(u)\neq0\}.
\]  If $\mu$ and $\nu$ are two such
measures, a \emph{coupling} of $\mu$ and $\nu$ is a function
$\pi:V\times V\to[0,\infty)$ satisfying
\[
\sum_{v\in V}\pi(u,v)=\mu(u)\quad(u\in V),
\qquad
\sum_{u\in V}\pi(u,v)=\nu(v)\quad(v\in V).
\]
The number $\pi(u,v)$ is the mass transported from $u$ to $v$.  Its total
transportation cost is $\sum_{u,v}d(u,v)\pi(u,v)$, and the minimum such cost is
\[
W_1(\mu,\nu)
:=\inf_{\pi}\sum_{u,v\in V}d(u,v)\pi(u,v),
\]
where the infimum is taken over all couplings of $\mu$ and $\nu$.  In the
curvature definitions below, the two measures are $\mu_x^\alpha$ and
$\mu_y^\alpha$.

Define once and for all
\[
\Lip_1(G)
:=\bigl\{\varphi:V\to\R:
|\varphi(u)-\varphi(v)|\leq d(u,v)
\text{ for all }u,v\in V\bigr\}.
\]
Thus $\Lip_1(G)$ is the class of real-valued $1$-Lipschitz functions on the
metric space $(V,d)$.  Kantorovich duality \cite[Chapter~2]{Villani2003} gives
\[
W_1(\mu,\nu)
=\sup_{\varphi\in\Lip_1(G)}
\sum_{z\in V}\varphi(z)\bigl(\mu(z)-\nu(z)\bigr).
\]

Following Ollivier \cite{Ollivier2009}, for distinct vertices $x,y$ define
\[
\kappa_\alpha(x,y)
:=1-\frac{W_1(\mu_x^\alpha,\mu_y^\alpha)}{d(x,y)}.
\]
Lin, Lu, and Yau proved that the following limit exists
\cite{LinLuYau2011}.

\begin{definition}[Lin--Lu--Yau curvature]
For distinct vertices $x,y$, define
\[
\kappa(x,y)
:=\lim_{\alpha\to1^-}
\frac{\kappa_\alpha(x,y)}{1-\alpha}.
\]
If $G$ is finite and has at least one edge, set
\[
\kappa_{\min}(G)
:=\min_{\{x,y\}\in E(G)}\kappa(x,y).
\]
We also write
\[
\kappa_{\mathrm{LLY}}(x,y):=\kappa(x,y),
\qquad
\kappa_{\min}^{\mathrm{LLY}}(G):=\kappa_{\min}(G),
\]
when it is useful to emphasize the curvature under consideration.
\end{definition}

We use the limit definition and the limit-free Laplacian formula below.

The following two facts will be used below.  The first is the locality of the
finite optimal-transport problem, and the second is the Lin--Lu--Yau
Bonnet--Myers estimate \cite{LinLuYau2011}.

\begin{lemma}[Locality and Bonnet--Myers]\label{lem:LLY-basic}
Let $G$ be connected and locally finite.
\begin{enumerate}[label=\textup{(\roman*)}]
\item Let $x,y\in V$ have positive degree.  For $\alpha\in[0,1]$, the value
$W_1(\mu_x^\alpha,\mu_y^\alpha)$ depends only on the pairwise graph distances
among vertices in
\[
\supp\mu_x^\alpha\cup\supp\mu_y^\alpha
\subseteq N[x]\cup N[y].
\]
For $\alpha\in(0,1)$, one has $\supp\mu_x^\alpha=N[x]$ and
$\supp\mu_y^\alpha=N[y]$.
\item If $\kappa(x,y)\geq\kappa_0>0$ for every edge $\{x,y\}\in E(G)$, then
\[
\diam(G)\leq\frac{2}{\kappa_0},
\qquad
\diam(G):=\sup_{u,v\in V}d(u,v).
\]
\end{enumerate}
\end{lemma}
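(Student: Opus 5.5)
The statement has two independent parts.  Part (i) is a finite linear-programming observation about couplings.  Part (ii) is the Lin--Lu--Yau Bonnet--Myers estimate, which I would prove by comparing two estimates for $W_1(\mu_u^\alpha,\mu_v^\alpha)$ at a pair $u,v$ at distance $L$, letting $\alpha\to1^-$.

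For (i), I would first note the support claims directly from \eqref{eq:lazy-measure}.  The measure $\mu_x^\alpha$ vanishes off $N[x]$, and for $\alpha\in(0,1)$ both $\alpha>0$ and $(1-\alpha)/d_x>0$, so $\supp\mu_x^\alpha=N[x]$ exactly; the same holds at $y$.  Next, let $\pi$ be any coupling.  If $\mu_x^\alpha(u)=0$, then $\sum_v\pi(u,v)=0$ with all terms nonnegative, so $\pi(u,\cdot)\equiv0$; symmetrically in the second variable.  Hence every coupling is supported in $\supp\mu_x^\alpha\times\supp\mu_y^\alpha$.  Both the constraints and the cost $\sum d(u,v)\pi(u,v)$ therefore involve only the values $\mu_x^\alpha,\mu_y^\alpha$ and the distances $d(u,v)$ with $u,v$ in $\supp\mu_x^\alpha\cup\supp\mu_y^\alpha\subseteq N[x]\cup N[y]$.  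It follows that $W_1$ is the optimum of a finite linear program depending only on these data.

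For (ii), fix $u,v$ with $d(u,v)=L<\infty$ (connectedness) and a geodesic $u=x_0\sim x_1\sim\cdots\sim x_L=v$.
\begin{itemize}
\item \emph{Lower bound.}  Take the $1$-Lipschitz test function $\varphi=d(\cdot,v)$ in Kantorovich duality.  Since $\mu_u^\alpha$ is concentrated on $N[u]$, where $\varphi\geq L-1$ with value $L$ at $u$, and $\mu_v^\alpha$ is concentrated on $N[v]$, where $\varphi\le1$ with value $0$ at $v$, one gets
\[
W_1(\mu_u^\alpha,\mu_v^\alpha)\ge L-2(1-\alpha).
\]
\item \emph{Upper bound.}  The triangle inequality for $W_1$ (gluing optimal couplings) gives
\[
W_1(\mu_u^\alpha,\mu_v^\alpha)\le\sum_{i=0}^{L-1}W_1(\mu_{x_i}^\alpha,\mu_{x_{i+1}}^\alpha)=\sum_{i}\bigl(1-\kappa_\alpha(x_i,x_{i+1})\bigr).
\]
\end{itemize}

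The main subtlety is converting the hypothesis on the limit $\kappa$ into a bound on $\kappa_\alpha$.  Concavity of $\alpha\mapsto\kappa_\alpha$ gives $\kappa_\alpha\le(1-\alpha)\kappa$, which is the wrong direction, so I would avoid monotonicity altogether.  Instead, fix $\varepsilon>0$ and use only that the geodesic has finitely many edges.  By the definition of $\kappa$ as a limit, there is $\alpha<1$ with $\kappa_\alpha(x_i,x_{i+1})\ge(1-\alpha)(\kappa_0-\varepsilon)$ for all $i$ simultaneously.

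Combining the two bounds gives
\[
L-2(1-\alpha)\le L-L(1-\alpha)(\kappa_0-\varepsilon).
\]
Dividing by $1-\alpha>0$ yields $L(\kappa_0-\varepsilon)\le2$.  Letting $\varepsilon\to0$ gives $L\le2/\kappa_0$.  Taking the supremum over all pairs $u,v$ gives $\diam(G)\le2/\kappa_0$.
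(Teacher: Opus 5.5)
Your proof is correct. The paper does not prove this lemma itself: it treats (i) as the evident locality of the finite transport problem and cites \cite{LinLuYau2011} for (ii). Your argument for (i) is that observation made explicit: every coupling is forced onto $\supp\mu_x^\alpha\times\supp\mu_y^\alpha$, so $W_1$ is a finite linear program in those masses and distances.

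For (ii) you use the same two ingredients as Lin, Lu, and Yau:
\begin{enumerate}[label=\textup{(\alph*)}]
\item the triangle inequality for $W_1$ along a geodesic;
\item the test function $d(\cdot,v)$, which gives $W_1(\mu_u^\alpha,\mu_v^\alpha)\ge L-2(1-\alpha)$.
\end{enumerate}
The difference is the order of operations.

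The cited argument first passes to the limit. It proves two facts about the limit curvature: an edgewise bound $\kappa\ge\kappa_0$ yields $\kappa(u,v)\ge\kappa_0$ for every pair, and $\kappa(u,v)\le 2/d(u,v)$ holds universally. This uses the existence of the limit defining $\kappa(u,v)$ for non-adjacent pairs, which they obtain from concavity of $\alpha\mapsto\kappa_\alpha$.

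You instead compare the two bounds at a single $\alpha$, chosen by an $\varepsilon$-argument over the finitely many geodesic edges, and divide by $1-\alpha$ only afterwards. So you need the limit only on edges, and never need curvature of non-adjacent pairs or concavity. Your remark that concavity gives only $\kappa_\alpha\le(1-\alpha)\kappa$ is correct. That inequality points the wrong way for transferring a lower bound on $\kappa$ to a fixed $\alpha$, which is exactly why your $\varepsilon$-step is needed.
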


For the remainder of this subsection, assume that $G$ is finite, connected,
and has at least one edge.  If $\varphi:V\to\R$ is any real-valued function,
define its neighbor average and normalized graph Laplacian at $x\in V$ by
\[
\av_x\varphi
:=\frac1{d_x}\sum_{u\sim x}\varphi(u),
\qquad
\Delta\varphi(x)
:=\frac1{d_x}\sum_{u\sim x}\bigl(\varphi(u)-\varphi(x)\bigr)
=\av_x\varphi-\varphi(x).
\]
Because $G$ is connected and has an edge, every vertex has positive degree,
so these expressions are defined at every vertex.

Let $x\sim y$.  The limit-free Laplacian characterization of M\"unch and
Wojciechowski \cite[Theorem~2.1]{MunchWojciechowski2019} gives, for the
normalized Laplacian,
\begin{equation}\label{eq:curvature-laplacian}
\kappa(x,y)
=\inf_{\substack{h\in\Lip_1(G)\\ h(y)-h(x)=1}}
\bigl(\Delta h(x)-\Delta h(y)\bigr).
\end{equation}
Here the infimum is taken over the class $\Lip_1(G)$ defined above.  Since
$x\sim y$, the graph distance between the two endpoints is $d(x,y)=1$.
Moreover, because $G$ is finite, every real-valued function on $V(G)$ has
finite support.  Hence the compact-support condition in
\cite[Theorem~2.1]{MunchWojciechowski2019} is automatically satisfied.

Adding a constant to a test function $h$ changes neither its Lipschitz
constant nor its Laplacian.  Hence the infimum may be restricted to functions
satisfying $h(x)=0$ and $h(y)=1$.  For such an $h$, set $g:=1-h$.  Then
$g\in\Lip_1(G)$, $g(x)=1$, and $g(y)=0$.  We therefore define the admissible
set for the equivalent maximization problem by
\[
\mathcal F_{x,y}
:=\{g\in\Lip_1(G): g(x)=1,\ g(y)=0\}.
\]
Since $\Delta h(v)=\av_v h-h(v)$,
\[
\Delta h(x)-\Delta h(y)
=1-\bigl(\av_xg-\av_yg\bigr).
\]
Consequently,
\begin{equation}\label{eq:curvature-LP}
\kappa(x,y)
=1-\max_{g\in\mathcal F_{x,y}}
\bigl(\av_xg-\av_yg\bigr).
\end{equation}
By the definition of $\Lip_1(G)$, every $g\in\mathcal F_{x,y}$ satisfies
\[
|g(u)-g(v)|\leq d(u,v)
\qquad (u,v\in V(G)).
\]
In particular, since $g(x)=1$,
\[
|g(v)-1|\leq d(v,x)
\qquad (v\in V(G)).
\]
Because $G$ is finite and connected, the numbers $d(v,x)$ are finite and
uniformly bounded as $v$ ranges over $V(G)$.  Hence
$\mathcal F_{x,y}$ is bounded in $\R^{V(G)}$.  It is also closed, since it is
defined by the linear inequalities above together with the affine conditions
$g(x)=1$ and $g(y)=0$.  Therefore $\mathcal F_{x,y}$ is a compact polytope,
and the continuous linear functional
$g\mapsto\av_xg-\av_yg$ attains its maximum on $\mathcal F_{x,y}$.

\section{The sharp one-half vanishing theorem}\label{sec:half}

This section proves \cref{thm:half-vanishing}.  The proof uses the
cycle-space quotient in \cref{lem:H1-cycle-space} and the limit-free
Laplacian formula \eqref{eq:curvature-laplacian}.

\begin{proof}[Proof of \cref{thm:half-vanishing}]
First GLMY path homology is the direct sum of the first GLMY path homology groups of
the connected components, and the curvature of an edge is determined within
its connected component.  Replacing $G$ by a connected component with nonzero
first GLMY path homology reduces the proof to the case in which $G$ is connected.
Use the fixed orientation $E^+(G)$ from \cref{sec:prelim} and give
$C_1(G;\R)$ the Euclidean inner product for which the $1$-paths
$e_{uv}$, $(u,v)\in E^+(G)$, form an orthonormal basis.  Let
\[
Z_1=\ker(\partial_G:C_1(G;\R)\longrightarrow C_0(G;\R))
\]
be the cycle space, and put
\[
B_{\leq4}:=\cC_{\leq4}(G;\R)
=\operatorname{span}_{\R}
\{\mathbf c_C:C\text{ is a simple $3$- or $4$-cycle}\}.
\]
Here $\mathbf c_C$ is the oriented cycle vector defined in
\cref{sec:prelim}; changing the cyclic orientation of $C$ changes
$\mathbf c_C$ only by a sign.  By \cref{lem:H1-cycle-space},
\[
\PathH_1(G;\R)\cong Z_1/B_{\leq4}.
\]
Since $B_{\leq4}\subseteq Z_1$ and the quotient is nonzero, the orthogonal
complement of $B_{\leq4}$ inside $Z_1$ is nonzero.  Choose
\[
0\neq\omega\in Z_1\cap B_{\leq4}^{\perp}.
\]
Recall that
\[
C_1(G;\R)
=
\operatorname{span}_{\R}
\{e_{uv}:(u,v)\in E^+(G)\},
\]
where $E^+(G)$ contains exactly one chosen orientation of each unordered edge
of $G$.  Thus
\[
\{e_{uv}:(u,v)\in E^+(G)\}
\]
is the fixed orthonormal basis of $C_1(G;\R)$ used above.  Write
\[
\omega
=
\sum_{(u,v)\in E^+(G)}\omega_{uv}e_{uv}.
\]
For the reverse orientation of the same unordered edge, define
\[
\omega_{vu}:=-\omega_{uv}.
\]
Hence $\omega_{uv}$ is defined for every ordered adjacent pair $u\sim v$ and
satisfies
\[
\omega_{uv}=-\omega_{vu}.
\]

We now record separately the consequences of the two properties
\[
\omega\in Z_1
\qquad\text{and}\qquad
\omega\in B_{\leq4}^{\perp}.
\]
Since $Z_1=\ker\partial_G$ and $\omega\in Z_1$, we have
$\partial_G\omega=0$.  With the above antisymmetric convention, the
coefficient of $e_v$ in $\partial_G\omega$ is
$-\sum_{z\sim v}\omega_{vz}$.  Therefore
\begin{equation}\label{eq:omega-divergence}
\sum_{z\sim v}\omega_{vz}=0
\qquad(v\in V(G)).
\end{equation}

On the other hand,
\[
B_{\leq4}
=
\operatorname{span}_{\R}
\{\mathbf c_C:C\text{ is a simple $3$- or $4$-cycle}\},
\]
and $\omega\in B_{\leq4}^{\perp}$.  Hence
\[
\langle\omega,\mathbf c_C\rangle=0
\]
for every simple $3$- or $4$-cycle $C$.  If
\[
C=(v_0,v_1,\ldots,v_{k-1}),
\qquad k\in\{3,4\},
\]
is given a cyclic orientation, then the definition of the oriented cycle
vector $\mathbf c_C$ gives
\begin{equation}\label{eq:omega-short-closed}
\sum_{i=0}^{k-1}\omega_{v_i v_{i+1}}=0,
\end{equation}
where the indices are taken modulo $k$.  Thus, for a simple $3$- or $4$-cycle
equipped with a cyclic orientation, the sum of the
coefficients $\omega_{uv}$ along its oriented boundary is zero.

Normalize $\omega$ so that
\[
\max_{u\sim v}|\omega_{uv}|=1,
\]
and choose an edge $\{x,y\}\in E(G)$, labeling its endpoints so that
\begin{equation}\label{eq:omega-max-edge}
\omega_{xy}=1.
\end{equation}

\medskip
\noindent\emph{Local potential on the endpoint neighborhoods.}
The curvature estimate will be tested at the edge $\{x,y\}$.  The quantities
$\Delta h(x)$ and $\Delta h(y)$ depend only on the values of $h$ on the closed
neighborhoods $N[x]$ and $N[y]$, respectively.  Since $x\sim y$, both closed
neighborhoods are contained in
\[
S:=N(x)\cup N(y).
\]
We shall construct a scalar function $F:S\to\R$ whose discrete differences
recover the coefficients $\omega_{uv}$ on every edge with both endpoints in
$S$.  Relation \eqref{eq:omega-short-closed} is the ingredient that makes this
local integration possible.

To define $F$ without overlapping cases, set
\[
N_{xy}:=N(x)\cap N(y),
\]
\[
A_x:=N(x)\setminus\bigl(N(y)\cup\{y\}\bigr),
\qquad
A_y:=N(y)\setminus\bigl(N(x)\cup\{x\}\bigr).
\]
Then
\[
N(x)=\{y\}\sqcup A_x\sqcup N_{xy},
\qquad
N(y)=\{x\}\sqcup A_y\sqcup N_{xy},
\]
and therefore
\[
S=\{x,y\}\sqcup A_x\sqcup N_{xy}\sqcup A_y.
\]
Define $F:S\to\R$ by
\begin{equation}\label{eq:local-potential}
F(z):=
\begin{cases}
0,&z=x,\\
1,&z=y,\\
\omega_{xz},&z\in A_x\cup N_{xy},\\
1+\omega_{yz},&z\in A_y.
\end{cases}
\end{equation}
The values at $x$ and $y$ satisfy
\[
F(y)-F(x)=1=\omega_{xy},
\qquad
F(x)-F(y)=-1=\omega_{yx}.
\]
For a common neighbor $z\in N_{xy}$, relation
\eqref{eq:omega-short-closed} applied to the simple $3$-cycle
$(x,y,z)$ gives
\[
0=\omega_{xy}+\omega_{yz}+\omega_{zx}
  =1+\omega_{yz}-\omega_{xz}.
\]
Hence
\begin{equation}\label{eq:common-neighbor-potential}
\omega_{xz}=1+\omega_{yz}
\qquad(z\in N_{xy}).
\end{equation}
Thus the value assigned to a common neighbor by
\eqref{eq:local-potential} is also the value obtained by integrating from the
$y$-side.

We next prove that $F$ has the required edge differences.  Namely, for every
ordered adjacent pair $(u,v)$ with $u,v\in S$,
\begin{equation}\label{eq:potential-edge-difference}
F(v)-F(u)=\omega_{uv}.
\end{equation}
Since $G$ is simple, $u\neq v$.  We distinguish the following cases.

\smallskip
\noindent\emph{Case 1: one endpoint is $x$ or $y$.}
Suppose first that $u=x$.  If $v=y$, then
\eqref{eq:potential-edge-difference} is exactly
$F(y)-F(x)=\omega_{xy}=1$.  If $v\in A_x\cup N_{xy}$, then the definition of
$F$ gives
\[
F(v)-F(x)=\omega_{xv}.
\]
Similarly, if $u=y$ and $v\in A_y$, then
\[
F(v)-F(y)=\omega_{yv},
\]
while for $v\in N_{xy}$ the same identity follows from
\eqref{eq:common-neighbor-potential}.  The case $v=x$ or $v=y$ follows by
reversing the ordered pair and using
$\omega_{vu}=-\omega_{uv}$.

\smallskip
\noindent\emph{Case 2: the two endpoints lie in the same endpoint
neighborhood.}
Assume that neither endpoint is $x$ or $y$.  If $u,v\in N(x)$, then
$(x,u,v)$ is a simple $3$-cycle.  Applying
\eqref{eq:omega-short-closed} to its cyclic orientation gives
\[
0=\omega_{xu}+\omega_{uv}+\omega_{vx},
\]
and hence
\[
\omega_{uv}=\omega_{xv}-\omega_{xu}=F(v)-F(u).
\]
The same argument with the $3$-cycle $(y,u,v)$ proves
\eqref{eq:potential-edge-difference} when $u,v\in N(y)$.

\smallskip
\noindent\emph{Case 3: a cross edge.}
The only remaining possibility, after interchanging $u$ and $v$ if necessary,
is
\[
u\in A_x,
\qquad
v\in A_y.
\]
Then $(x,u,v,y)$ is a simple $4$-cycle.  Applying
\eqref{eq:omega-short-closed} to this cyclic orientation gives
\[
0=\omega_{xu}+\omega_{uv}+\omega_{vy}+\omega_{yx}.
\]
Using $\omega_{xy}=1$ and antisymmetry,
\[
\omega_{uv}=1+\omega_{yv}-\omega_{xu}.
\]
By \eqref{eq:local-potential},
$F(u)=\omega_{xu}$ and $F(v)=1+\omega_{yv}$, and therefore
\[
\omega_{uv}=F(v)-F(u).
\]
For the reversed ordered pair, the identity follows from antisymmetry.

These cases exhaust all ordered adjacent pairs in $S$, and hence
\eqref{eq:potential-edge-difference} holds throughout the induced local
configuration.

The function $F$ now has the correct edge differences, but it need not yet be
$1$-Lipschitz with respect to the ambient graph distance on every pair of
vertices in $S$.  Indeed, the normalization $|\omega_{uv}|\leq1$ implies
\[
F(S)\subseteq[-1,2],
\]
whose diameter is $3$.  We therefore compress only the values outside the
interval $[0,1]$, while keeping the normalization $F(x)=0$ and $F(y)=1$
unchanged.  Besides making the local function $1$-Lipschitz, this compression
halves precisely the negative contributions at $x$ and the positive
contributions at $y$; this is the source of the factor $1/2$ in the final
curvature estimate.  Define the nondecreasing piecewise-linear map
$T:[-1,2]\to[-1/2,3/2]$ by
\[
T(t)=
\begin{cases}
t/2,&-1\leq t\leq0,\\
t,&0\leq t\leq1,\\
(t+1)/2,&1\leq t\leq2.
\end{cases}
\]
Since $T$ is piecewise linear with absolute slope at most $1$, it is
$1$-Lipschitz.  Moreover, $T(0)=0$ and $T(1)=1$.  Define
\[
f:=T\circ F:S\to\R.
\]
Then
\begin{equation}\label{eq:f-local-values}
f(x)=0,\qquad f(y)=1,\qquad
f(S)\subseteq[-1/2,3/2].
\end{equation}
If $u,v\in S$ are adjacent, then the $1$-Lipschitz property of $T$ and
\eqref{eq:potential-edge-difference} give
\[
|f(u)-f(v)|\leq |F(u)-F(v)|
=|\omega_{uv}|\leq1.
\]
If $u=v$, the Lipschitz inequality is trivial.  If $u\neq v$
and $u,v\in S$ are not adjacent, then $d_G(u,v)\geq2$, while
\eqref{eq:f-local-values} gives $|f(u)-f(v)|\leq2$.  Therefore
\begin{equation}\label{eq:f-environment-lipschitz}
|f(u)-f(v)|\leq d_G(u,v)
\qquad(u,v\in S).
\end{equation}
Here and below, distances between vertices of $S$ are measured in the ambient
graph $G$, rather than in the induced subgraph on $S$.

The function $f$ is now $1$-Lipschitz on the subset $S$, but the variational
curvature formula \eqref{eq:curvature-laplacian} requires a function defined on
all of $V(G)$.  We therefore extend $f$ to the whole graph by the McShane
formula \cite{McShane1934}
\begin{equation}\label{eq:mcshane-extension}
\widetilde f(v)
=\inf_{s\in S}\{f(s)+d_G(v,s)\}.
\end{equation}
Equation \eqref{eq:f-environment-lipschitz} implies
\[
\widetilde f|_S=f,\qquad \widetilde f\in\Lip_1(G).
\]
In particular, $\widetilde f(x)=0$ and $\widetilde f(y)=1$.  Hence
\eqref{eq:curvature-laplacian} gives
\begin{equation}\label{eq:kappa-test-f}
\kappa_{\mathrm{LLY}}(x,y)
\leq\Delta\widetilde f(x)-\Delta\widetilde f(y).
\end{equation}
Because every neighbor of $x$ and $y$ lies in $S$ and
$\widetilde f|_S=f$, these two Laplacians are computed entirely from the
local values of $f$ constructed above.

For every $z\in N(x)$, we have $z\in S$ and hence
$\widetilde f(z)=f(z)$.  Using the definition of $F$ on $N(x)$ and the
piecewise definition of $T$, we obtain
\[
\widetilde f(z)-\widetilde f(x)
=f(z)-f(x)
=
\begin{cases}
\omega_{xz},&\omega_{xz}\geq0,\\
\omega_{xz}/2,&\omega_{xz}\leq0.
\end{cases}
\qquad(z\in N(x)).
\]
Therefore
\[
\begin{aligned}
\Delta\widetilde f(x)
&=\frac1{d_x}
  \left(
  \sum_{\substack{z\sim x\\\omega_{xz}>0}}\omega_{xz}
  +\frac12
  \sum_{\substack{z\sim x\\\omega_{xz}<0}}\omega_{xz}
  \right).
\end{aligned}
\]
Since \eqref{eq:omega-divergence} gives
\[
\sum_{\substack{z\sim x\\\omega_{xz}<0}}\omega_{xz}
=-\sum_{\substack{z\sim x\\\omega_{xz}>0}}\omega_{xz},
\]
it follows that
\begin{equation}\label{eq:laplacian-x-half}
\Delta\widetilde f(x)
=\frac1{2d_x}
\sum_{\substack{z\sim x\\\omega_{xz}>0}}\omega_{xz}.
\end{equation}

Similarly, for every $z\in N(y)$,
\[
\widetilde f(z)-\widetilde f(y)
=f(z)-f(y)
=
\begin{cases}
\omega_{yz},&\omega_{yz}\leq0,\\
\omega_{yz}/2,&\omega_{yz}\geq0,
\end{cases}
\qquad(z\in N(y)).
\]
Thus
\[
\Delta\widetilde f(y)
=\frac1{d_y}
  \left(
  \sum_{\substack{z\sim y\\\omega_{yz}<0}}\omega_{yz}
  +\frac12
  \sum_{\substack{z\sim y\\\omega_{yz}>0}}\omega_{yz}
  \right).
\]
Using \eqref{eq:omega-divergence} again gives
\begin{equation}\label{eq:laplacian-y-half}
\Delta\widetilde f(y)
=-\frac1{2d_y}
\sum_{\substack{z\sim y\\\omega_{yz}>0}}\omega_{yz}.
\end{equation}
Combining \eqref{eq:kappa-test-f}, \eqref{eq:laplacian-x-half}, and
\eqref{eq:laplacian-y-half} gives
\begin{equation}\label{eq:kappa-positive-flow-bound}
\kappa_{\mathrm{LLY}}(x,y)
\leq\frac12\left(
\frac1{d_x}\sum_{\substack{z\sim x\\\omega_{xz}>0}}\omega_{xz}
+
\frac1{d_y}\sum_{\substack{z\sim y\\\omega_{yz}>0}}\omega_{yz}
\right).
\end{equation}

It remains to bound each positive sum.  Fix $v\in\{x,y\}$.  Since
$|\omega_{vz}|\leq1$ for every $z\sim v$,
\[
\sum_{\substack{z\sim v\\\omega_{vz}>0}}\omega_{vz}
\leq
\bigl|\{z\sim v:\omega_{vz}>0\}\bigr|.
\]
On the other hand, \eqref{eq:omega-divergence} implies
\[
\sum_{\substack{z\sim v\\\omega_{vz}>0}}\omega_{vz}
=-\sum_{\substack{z\sim v\\\omega_{vz}<0}}\omega_{vz}
\leq
\bigl|\{z\sim v:\omega_{vz}<0\}\bigr|.
\]
Adding these inequalities gives
\[
2\sum_{\substack{z\sim v\\\omega_{vz}>0}}\omega_{vz}
\leq
\bigl|\{z\sim v:\omega_{vz}>0\}\bigr|
+
\bigl|\{z\sim v:\omega_{vz}<0\}\bigr|
\leq d_v.
\]
Therefore, for $v=x$ and $v=y$,
\begin{equation}\label{eq:positive-sum-half}
\frac1{d_v}
\sum_{\substack{z\sim v\\\omega_{vz}>0}}\omega_{vz}
\leq\frac12.
\end{equation}
Substituting \eqref{eq:positive-sum-half} into
\eqref{eq:kappa-positive-flow-bound} yields
\[
\kappa_{\mathrm{LLY}}(x,y)
\leq\frac12\left(\frac12+\frac12\right)
=\frac12.
\]

Taking the contrapositive proves the vanishing assertion.  Finally, $C_5$
has nonzero first GLMY path homology and curvature $1/2$ on every edge, as
computed in \cref{sec:examples}; hence the threshold and the strict
inequality are sharp.
\end{proof}

\begin{remark}[Necessary conditions at the threshold]
The proof also records restrictions on equality in the estimate above.  At
each of the two selected endpoints, equality in
\eqref{eq:positive-sum-half} requires the positive and negative supports of
the normalized circulation to exhaust the incident edges, to have the same
cardinality, and to carry coefficients of absolute value one.  In addition,
the compressed McShane extension must minimize the variational formula
\eqref{eq:curvature-laplacian}.  We do not attempt here to classify all graphs
for which these necessary conditions are simultaneously attained.
\end{remark}

\section{A short-cycle fundamental-group consequence of positive curvature}\label{sec:short}

This section records a fundamental-group consequence of the
curvature-preserving universal-cover method developed by Hehl and M\"unch
\cite{HehlMunch2025}.  Their curvature-adapted complex $M_2(G)$ is designed so
that its universal cover preserves the local metric information entering the
Ollivier-curvature problem; the local distance statement underlying this
mechanism is \cite[Lemma~2.3]{HehlMunch2025}.  We do not introduce a new
curvature-preserving cover here.  Instead, in the unweighted setting we use
the explicit complex $\Xshort{5}(G)$ and isolate a simple sufficient
condition---$5$-cycle preservation---that makes the same local metric
argument transparent.

The point we retain is a fundamental-group conclusion.  If
$\kappa_{\min}(G)>0$, local curvature preservation allows the
Lin--Lu--Yau Bonnet--Myers estimate to be applied to the one-skeleton of the
universal cover of $\Xshort{5}(G)$.  The resulting diameter bound, together
with the degree bound inherited from the finite base graph, forces the
universal cover to be finite and gives an explicit bound on
$|\pi_1(\Xshort{5}(G),o)|$ for any base vertex $o$.  Hehl and M\"unch already
develop the corresponding
Betti-number consequences for their curvature-adapted complex, so we do not
repeat analogous homological corollaries in this section.

\subsection{Local transport under short-cycle-preserving covers}

We use the standard graph-theoretic terminology for walks, with the following
conventions.

For an integer $r\geq0$, a \emph{walk of length $r$} in a graph $G$ is a
finite vertex sequence
\[
W=v_0v_1\cdots v_r
\]
such that
\[
\{v_{i-1},v_i\}\in E(G)
\qquad(1\le i\le r).
\]
Vertices and edges may occur more than once at different positions in the
walk.  Since $G$ is simple, consecutive vertices are distinct.  The walk is
\emph{closed} if $v_r=v_0$.  Its reverse is
\[
W^{-1}=v_rv_{r-1}\cdots v_0.
\]
Whenever the terminal vertex of one walk is the initial vertex of another,
we denote their concatenation by juxtaposition.  Thus expressions such as
$PW^{-1}$ below denote concatenations of walks, not products of algebraic
objects.

\begin{definition}[Graph covers and short-cycle preservation]
\label{def:graph-cover}
Let $G$ and $\widetilde G$ be connected simple graphs.  A \emph{graph cover}
$q:\widetilde G\to G$ is a surjective map on vertex sets such that, for every
$\widetilde v\in V(\widetilde G)$, the restriction
\[
q:N(\widetilde v)\longrightarrow N(q(\widetilde v))
\]
is a bijection.  For an integer $k\ge3$, the graph cover $q$ is
\emph{$k$-cycle preserving} if every lift of every simple cycle of $G$ of
length at most $k$ is closed.
\end{definition}

This is the standard locally bijective definition of a graph covering; see
\cite[Section~2.3]{KemptonMunchYau2021}.  Our $k$-cycle-preserving condition is
a special case of the $B$-preserving coverings considered there, with $B$ the
set of simple cycles of length at most $k$.

The local bijection in \cref{def:graph-cover} gives unique walk lifting.  More
precisely, if
\[
W=v_0v_1\cdots v_r
\]
is a walk in $G$ and $\widetilde v_0\in q^{-1}(v_0)$ is prescribed, the local
bijection property determines a unique walk
\[
\widetilde W=\widetilde v_0\widetilde v_1\cdots\widetilde v_r
\]
satisfying
\[
q(\widetilde v_i)=v_i
\qquad(0\le i\le r).
\]
We call $\widetilde W$ the \emph{lift of $W$ starting at $\widetilde v_0$}.

The following is the standard lifting property of CW coverings, applied to
$\Xshort{k}(G)$.

\begin{lemma}[One-skeleta of short-cycle covers]\label{lem:CW-graph-cover}
Let
\[
p:\widetilde X\longrightarrow \Xshort{k}(G)
\]
be a connected covering of CW complexes, and give $\widetilde X$ the lifted
CW structure.  Then the restriction
\[
p|_{\widetilde X^{(1)}}:\widetilde X^{(1)}\longrightarrow G
\]
is a connected $k$-cycle-preserving graph cover.
\end{lemma}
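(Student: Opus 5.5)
The plan is to check the three properties in turn: that $p|_{\widetilde X^{(1)}}$ is a graph cover (surjective on vertices and locally bijective on neighborhoods), that $\widetilde X^{(1)}$ is connected, and that it is $k$-cycle preserving. All three come from the standard theory of coverings of CW complexes: the restriction of a covering map to the preimage of a subcomplex is again a covering, and the lifted cells of $\widetilde X$ are exactly the lifts of cells of $\Xshort{k}(G)$.

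\emph{Cover property.} The $0$-cells of $\widetilde X$ are $p^{-1}(V(G))$, and the $1$-cells are the lifts of the open edges of $G$. Since $p$ is a covering of the connected complex $\Xshort{k}(G)$ and $\widetilde X$ is nonempty, $p$ is surjective, so every vertex of $G$ has a preimage. Fix $\widetilde v$ over $v$. Each edge $\{v,w\}$ is a path $[0,1]\to G$ starting at $v$. By unique path lifting it has exactly one lift starting at $\widetilde v$, and that lift is a $1$-cell of $\widetilde X$ ending at a vertex over $w$. This gives a bijection between the edges of $\widetilde X^{(1)}$ at $\widetilde v$ and the edges of $G$ at $v$.

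To obtain a bijection $N(\widetilde v)\to N(v)$ of vertex sets, I also need that $\widetilde X^{(1)}$ is simple. Because $G$ has no loops, no lifted edge is a loop. Two distinct lifted edges from $\widetilde v$ to the same $\widetilde w$ cannot project to the same edge of $G$, by uniqueness of lifts. They also cannot project to two different edges $\{v,w\}$ and $\{v,w'\}$, since then their endpoints would project to different vertices. So the map $N(\widetilde v)\to N(v)$ is well defined, injective and surjective.

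\emph{Connectivity.} $\widetilde X$ is connected, and a CW complex is connected exactly when its $1$-skeleton is, so $\widetilde X^{(1)}$ is connected.

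\emph{$k$-cycle preservation.} This is the step that needs the most care. Let $C$ be a simple cycle of $G$ of length at most $k$. By construction it bounds a $2$-cell $D$ of $\Xshort{k}(G)$ whose attaching map $\phi:S^1\to G$ traverses $C$. The closed loop $C$ is therefore null-homotopic in $\Xshort{k}(G)$, via the characteristic map of $D$. By the homotopy lifting property, any lift of a null-homotopic loop is a closed loop. Hence for every starting vertex $\widetilde v_0$ over a vertex of $C$, the lift of the closed walk $C$ is closed.

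I will add a remark that the walk lift of \cref{def:graph-cover} coincides with the topological path lift of the concatenated edge paths; this follows from the uniqueness argument in the cover step. Equivalently, the characteristic map $\Phi:D^2\to\Xshort{k}(G)$ lifts through $p$ because $D^2$ is simply connected, and the boundary of the lifted disk is the lift of $C$.
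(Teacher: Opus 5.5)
Your proposal is correct and follows essentially the same route as the paper: unique lifting of edges gives the neighborhood bijections, connectivity comes from that of $\widetilde X$, and $k$-cycle preservation comes from lifting the null-homotopy (equivalently, the characteristic map of the $2$-cell), so that the lifted boundary walk is closed. Your explicit checks fill in details the paper leaves implicit: that $\widetilde X^{(1)}$ is simple, that $p$ is surjective, and that walk lifts agree with topological path lifts.
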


\begin{proof}
Evenly covered open stars of vertices show that every edge incident to a
vertex of $G$ has a unique incident lift at each vertex above it.  Hence the
restriction to the one-skeleta induces the neighborhood bijections required
in \cref{def:graph-cover}; it is surjective because $p$ is a covering, and it
is connected because $\widetilde X$ is path connected.

Let $C$ be a simple cycle of length at most $k$, and prescribe a lift of its
initial vertex.  The cycle $C$ is the attaching loop of a $2$-cell of
$\Xshort{k}(G)$.  By the lifting theorem, its characteristic map from the
disk lifts after prescribing a point above a boundary point.  The boundary of
the lifted disk is therefore a closed lift of $C$ with the prescribed initial
vertex.  By the unique walk lifting that follows from
\cref{def:graph-cover}, this boundary walk is the prescribed lift of $C$.
Thus every such lift is closed.
\end{proof}

We first extend $k$-cycle preservation from simple cycles to arbitrary closed
walks of length at most $k$.

\begin{lemma}\label{lem:short-closed-walk}
If a graph cover is $k$-cycle preserving, then every lift of every closed walk
of length at most $k$ is closed.
\end{lemma}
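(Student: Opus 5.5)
We argue by strong induction on the length $r$ of the closed walk $W=v_0v_1\cdots v_r$, $v_r=v_0$, with $r\le k$. Fix a lift $\widetilde W=\widetilde v_0\cdots\widetilde v_r$ starting at a prescribed $\widetilde v_0$; we must show $\widetilde v_r=\widetilde v_0$. Since $G$ is simple, $r\neq1$, and $r=0$ is trivial. For $r=2$ the walk is $v_0v_1v_0$. The lift $\widetilde v_1$ is the unique neighbour of $\widetilde v_0$ over $v_1$. Then $\widetilde v_2$ is the unique neighbour of $\widetilde v_1$ over $v_0$. But $\widetilde v_0$ is already a neighbour of $\widetilde v_1$ over $v_0$, so $\widetilde v_2=\widetilde v_0$ by injectivity of $q$ on $N(\widetilde v_1)$. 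This backtracking observation is the basic local step.

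For general $r\ge3$ we split into two cases. If the vertices $v_0,\dots,v_{r-1}$ are pairwise distinct, then $W$ traverses a simple cycle of length $r\le k$. This uses $r\ge3$, so consecutive distinct vertices give genuine edges and a cycle, and the walk is that cycle read from $v_0$ in one orientation. The hypothesis says every lift of it is closed. One small point is that the hypothesis concerns lifts of the cycle starting at any prescribed vertex. Since our walk starts at $v_0$ and the definition of a simple cycle as a cyclic sequence allows any starting vertex and either orientation, this is covered. If needed, we can note that a closed lift of a cyclic rotation gives a closed lift of the walk, by lifting the rotated walk from the appropriate lifted vertex and using uniqueness of lifts.

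Otherwise some vertex repeats: $v_i=v_j$ with $0\le i<j\le r$ and $(i,j)\neq(0,r)$. Write $W=W_1W_2W_3$ where $W_2=v_i\cdots v_j$ is a closed subwalk of length $j-i<r$, and $W_1W_3=v_0\cdots v_iv_{j+1}\cdots v_r$ is a closed walk of length $r-(j-i)<r$. Lift $W_1$ from $\widetilde v_0$ to reach $\widetilde v_i$. By induction, the lift of $W_2$ from $\widetilde v_i$ closes, so $\widetilde v_j=\widetilde v_i$. Hence the remaining portion of $\widetilde W$ is the lift of $W_3$ starting from $\widetilde v_i$, which coincides with the continuation of the lift of $W_1W_3$. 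Applying induction to $W_1W_3$ gives $\widetilde v_r=\widetilde v_0$.

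If the length reduction yields a walk of length $1$, that cannot happen since there are no loops. Length $0$ and $2$ are the base cases above.

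The main care needed is the bookkeeping of uniqueness of lifts when splicing. A lift of a concatenation is the concatenation of lifts, each started at the endpoint of the previous one; this follows directly from the unique walk lifting stated after \cref{def:graph-cover}. The only other subtlety is making sure that the "no repetition" case really is a simple cycle, which requires $r\ge3$.
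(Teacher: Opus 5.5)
Your proposal is correct and follows essentially the same argument as the paper: induction on the length, with the simple-cycle case taken from the definition, and otherwise a split at a repeated vertex into a shorter closed subwalk and a shorter closed remainder, with lifts spliced by uniqueness. The only difference is cosmetic. The paper treats an immediate reversal $aba$ as a separate case at every length, whereas you handle the backtrack once, as the base case $r=2$, and let the general repeated-vertex split absorb it.
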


\begin{proof}
We argue by induction on the length of the closed walk.  The assertion is
trivial for length zero, and it is exactly the definition of
$k$-cycle preservation when the walk is a simple cycle.

Suppose that the closed walk $W$ is not simple.  If $W=A\,aba\,B$ contains an
immediate reversal, then $AB$ is a shorter closed walk.  By induction the
lift of $AB$ from any chosen lift of its initial vertex is closed.  The lift
of the two-edge segment $aba$, inserted after the lift of $A$, returns to the
same lifted vertex, so the lift of $W$ is closed as well.

Assume now that there is no immediate reversal.  Choose two occurrences of a
repeated vertex before the final return and write
\[
W=A\,C\,B,
\]
where $C$ is the nonempty closed subwalk between those occurrences.  Both
$C$ and $AB$ are strictly shorter closed walks.  Lift $A$ from the prescribed
initial lift.  By induction, the lift of $C$ from the endpoint of the lifted
$A$ returns to that same endpoint.  Continuing with the lift of $B$ then has
the same endpoint as the closed lift of $AB$, and hence returns to the
initial vertex.  Thus the lift of $W$ is closed.
\end{proof}

We now identify the metric information needed for the transport problem.
Let $q:\widetilde G\to G$ be a $5$-cycle-preserving graph cover and let
$\{\widetilde x,\widetilde y\}\in E(\widetilde G)$.  Write
\[
x=q(\widetilde x),\qquad y=q(\widetilde y).
\]
The lazy random-walk measures at $\widetilde x$ and $\widetilde y$ are
supported on $N[\widetilde x]$ and $N[\widetilde y]$, respectively.
Consequently, preservation of the Lin--Lu--Yau transport problem reduces to
preservation of the distances between these two closed neighborhoods.  The local metric mechanism is the one used by Hehl and M\"unch
\cite[Lemma~2.3]{HehlMunch2025}.  We include the short proof in our explicit
$5$-cycle-preserving formulation because it shows directly why cycles of
length at most five are sufficient.

\begin{lemma}[Local distance preservation]\label{lem:local-distance}
Under the notation above,
\[
d_{\widetilde G}(\widetilde u,\widetilde v)
=
d_G(q(\widetilde u),q(\widetilde v))
\]
for every
\[
\widetilde u\in N[\widetilde x],
\qquad
\widetilde v\in N[\widetilde y].
\]
\end{lemma}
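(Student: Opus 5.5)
Since $q$ maps edges to edges, it is $1$-Lipschitz, so $d_G(q(\widetilde u),q(\widetilde v))\le d_{\widetilde G}(\widetilde u,\widetilde v)$ always holds. The plan is therefore to prove the reverse inequality. The basic move is to lift a shortest path downstairs and then show that the lift ends at the correct vertex $\widetilde v$, not at some other point of the fibre $q^{-1}(q(\widetilde v))$.

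\textbf{Setting up the lift.} Set $u=q(\widetilde u)$ and $v=q(\widetilde v)$. Since $\widetilde u\in N[\widetilde x]$ and $\widetilde v\in N[\widetilde y]$ with $\widetilde x\sim\widetilde y$, there is a walk $\widetilde P$ from $\widetilde u$ to $\widetilde v$ of length at most $3$. One can take $\widetilde P=\widetilde u\,\widetilde x\,\widetilde y\,\widetilde v$ and delete the steps that are trivial, namely when $\widetilde u=\widetilde x$ or $\widetilde v=\widetilde y$. Let $P=q(\widetilde P)$, which is a walk in $G$ from $u$ to $v$ of length $\ell\le3$. Now let $Q$ be a geodesic in $G$ from $u$ to $v$, so its length is $m=d_G(u,v)\le\ell\le 3$. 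Lift $Q$ starting at $\widetilde u$ to obtain $\widetilde Q$, a walk of length $m$ in $\widetilde G$.

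\textbf{Closing the loop.} Consider the walk $QP^{-1}$ in $G$. It starts at $u$, runs along $Q$ to $v$, and returns along $P^{-1}$ to $u$, so it is a closed walk of length $m+\ell\le 6$.

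\begin{itemize}
\item If $m+\ell\le5$, then \cref{lem:short-closed-walk} applies with $k=5$, so the lift of $QP^{-1}$ from $\widetilde u$ is closed. By uniqueness of walk lifting, this lift is $\widetilde Q$ followed by the reverse of the lift of $P$ from $\widetilde u$. The lift of $P$ from $\widetilde u$ is exactly $\widetilde P$, because $\widetilde P$ is a lift of $P$ starting at $\widetilde u$. Closedness therefore forces $\widetilde Q$ to end at $\widetilde v$. Hence $d_{\widetilde G}(\widetilde u,\widetilde v)\le m=d_G(u,v)$.
\item The only remaining case is $m=\ell=3$, where the closed walk has length $6$ and \cref{lem:short-closed-walk} does not reach it. I expect this case to be the main obstacle. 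It arises only when $u\ne x$, $v\ne y$, $u$ and $v$ are distinct, $d_G(u,v)=3$, and $P=u\,x\,y\,v$.
\end{itemize}

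\textbf{Handling the length-$6$ case.} Here $d_G(u,v)=3$ while $\ell=3$, so the walk $P=u\,x\,y\,v$ is itself a geodesic. We may therefore choose $Q=P$. Its lift from $\widetilde u$ is $\widetilde P$, which ends at $\widetilde v$, and so $d_{\widetilde G}(\widetilde u,\widetilde v)\le3=d_G(u,v)$ directly. In other words, the argument should always choose $Q$ to be $P$ itself whenever $P$ is already geodesic. A closed walk through $P$ is only needed when $m<\ell$, and then its length is $m+\ell\le 2+3=5$.

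This choice also disposes of degenerate cases, such as $u=v$ or repeated vertices in $P$. When $m=0$, the closed walk is $P^{-1}$ itself, of length at most $3$, so \cref{lem:short-closed-walk} gives that $\widetilde P$ is closed, i.e.\ $\widetilde v=\widetilde u$. Combining the two inequalities gives the asserted equality.
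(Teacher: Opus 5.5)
Your proposal is correct and follows the paper's proof essentially step for step. It uses the same walk of length at most three through $\widetilde x\,\widetilde y$, the same split into the case where its projection is already a geodesic and the case $d_G(u,v)<\ell$ (where the closed walk has length at most $2+3=5$, so \cref{lem:short-closed-walk} applies), and the same uniqueness-of-lifting conclusion. One wording fix: in the closed-walk case, argue that the part of the closed lift after $\widetilde Q$, read backwards, is a lift of $P$ from $\widetilde u$ and hence equals $\widetilde P$. Do not assert up front that the closed lift is $\widetilde Q$ followed by $\widetilde P^{-1}$, since that already presupposes $\widetilde Q$ ends at $\widetilde v$.
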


\begin{proof}
Set
\[
u=q(\widetilde u),\qquad v=q(\widetilde v).
\]
Every walk in $\widetilde G$ projects to a walk of the same length in $G$.
In particular, a shortest walk in $\widetilde G$ projects to a walk in $G$,
so
\[
d_G(u,v)\le d_{\widetilde G}(\widetilde u,\widetilde v).
\]

For the reverse inequality, join $\widetilde u$ to $\widetilde v$ by the walk
obtained from the vertex sequence
\[
\widetilde u\,\widetilde x\,\widetilde y\,\widetilde v
\]
after deleting consecutive repetitions.  Its length is at most three.  Let
$W$ be its projection to $G$, and let $P$ be a shortest walk from $u$ to $v$,
so that $\ell(P)=d_G(u,v)$.  Then
\[
\ell(P)\le \ell(W)\le3.
\]
If $\ell(P)=\ell(W)$, the displayed walk in $\widetilde G$ already gives
\[
d_{\widetilde G}(\widetilde u,\widetilde v)
\le \ell(W)=d_G(u,v).
\]

Assume instead that $\ell(P)<\ell(W)$.  Then $\ell(P)\le2$, so the closed walk
$PW^{-1}$ has length at most five.  By
\cref{lem:short-closed-walk}, its lift beginning at $\widetilde u$ is closed.
Lift the shortest walk $P$ from $\widetilde u$.  If its endpoint is denoted
temporarily by
$\widetilde v'$, the remaining part of the closed lift is a lift of
$W^{-1}$ from $\widetilde v'$ back to $\widetilde u$.  Reversing that part
gives a lift of $W$ from $\widetilde u$ to $\widetilde v'$.  But the original
walk from $\widetilde u$ to $\widetilde v$ is also a lift of $W$ with the same
initial vertex.  Uniqueness of walk lifting therefore gives
$\widetilde v'=\widetilde v$.  Hence the lift of $P$ joins
$\widetilde u$ to $\widetilde v$, and
\[
d_{\widetilde G}(\widetilde u,\widetilde v)
\le \ell(P)=d_G(u,v).
\]
Together with the opposite inequality proved above, this gives the claim.
\end{proof}

By \cref{lem:local-distance}, the complete cost matrix entering the local
optimal-transport problem is unchanged by a $5$-cycle-preserving cover.  This
gives the curvature statement needed for the universal-cover argument.

\begin{proposition}[Local curvature preservation]\label{prop:cover-curvature}
Let $q:\widetilde G\to G$ be a $5$-cycle-preserving graph cover between
connected locally finite graphs, and let
$\{\widetilde x,\widetilde y\}\in E(\widetilde G)$, with
$x=q(\widetilde x)$ and $y=q(\widetilde y)$.  Then
\[
\kappa_\alpha^{\widetilde G}(\widetilde x,\widetilde y)
=
\kappa_\alpha^G(x,y)
\qquad(0\le\alpha\le1),
\]
where the superscript indicates the graph in which the curvature is computed.
Consequently,
\[
\kappa^{\widetilde G}(\widetilde x,\widetilde y)
=
\kappa^G(x,y).
\]
\end{proposition}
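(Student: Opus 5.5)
The plan is to show that the finite optimal-transport problem defining $W_1(\mu_{\widetilde x}^\alpha,\mu_{\widetilde y}^\alpha)$ in $\widetilde G$ is isomorphic to the one defining $W_1(\mu_x^\alpha,\mu_y^\alpha)$ in $G$, via the bijection induced by $q$. Then I will pass to the limit $\alpha\to1^-$.

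\emph{Step 1: matching the measures.} By \cref{def:graph-cover}, $q$ restricts to bijections $N(\widetilde x)\to N(x)$ and $N(\widetilde y)\to N(y)$. Hence $d_{\widetilde x}=d_x$ and $d_{\widetilde y}=d_y$, and $x\neq y$ because $q(\widetilde y)\in N(x)$. I will check that $q$ is injective on $N[\widetilde x]\cup N[\widetilde y]$. Two vertices of this set with the same image are at $\widetilde G$-distance zero by \cref{lem:local-distance} whenever one lies in $N[\widetilde x]$ and the other in $N[\widetilde y]$. If both lie in $N[\widetilde x]$, injectivity comes from the neighborhood bijection at $\widetilde x$, together with the fact that no neighbor of $\widetilde x$ maps to $x$, since $G$ is simple. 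The same argument works for $N[\widetilde y]$. So $q$ maps $N[\widetilde x]\cup N[\widetilde y]$ bijectively onto $N[x]\cup N[y]$, and $\mu_{\widetilde x}^\alpha$, $\mu_{\widetilde y}^\alpha$ push forward exactly to $\mu_x^\alpha$, $\mu_y^\alpha$ for every $\alpha\in[0,1]$. The formula \eqref{eq:lazy-measure} depends only on degrees and adjacency.

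\emph{Step 2: matching the costs.} Couplings of $\mu_{\widetilde x}^\alpha$ and $\mu_{\widetilde y}^\alpha$ are supported on $\supp\mu_{\widetilde x}^\alpha\times\supp\mu_{\widetilde y}^\alpha\subseteq N[\widetilde x]\times N[\widetilde y]$. By \cref{lem:local-distance}, every relevant cost $d_{\widetilde G}(\widetilde u,\widetilde v)$ with $\widetilde u\in N[\widetilde x]$ and $\widetilde v\in N[\widetilde y]$ equals $d_G(q\widetilde u,q\widetilde v)$. Transporting couplings through the bijection of Step 1 therefore gives a cost-preserving bijection between the two sets of couplings. This is the locality statement of \cref{lem:LLY-basic}(i) in a slightly refined form: only the source-to-target distances matter, not those within a single neighborhood. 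I will note this explicitly, since \cref{lem:local-distance} does not control distances inside $N[\widetilde x]$. It follows that $W_1$ agrees in the two graphs. Moreover, $d_{\widetilde G}(\widetilde x,\widetilde y)=1=d_G(x,y)$, so $\kappa_\alpha$ agrees for every $\alpha\in[0,1]$.

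\emph{Step 3: the limit.} Dividing by $1-\alpha$ and letting $\alpha\to1^-$ gives equality of the Lin--Lu--Yau curvatures, since the limit exists in both graphs by \cite{LinLuYau2011}. I expect the main obstacle to be the bookkeeping in Step 1. In particular, I must rule out coincidences such as a neighbor of $\widetilde x$ and a different neighbor of $\widetilde y$ projecting to the same common neighbor of $x$ and $y$. This is where $3$-cycle preservation, through \cref{lem:local-distance}, is genuinely used. Once that is settled, the remaining steps are formal.
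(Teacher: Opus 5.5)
Your proposal is correct and follows essentially the same route as the paper. Both arguments combine the two closed-neighborhood bijections into a bijection $N[\widetilde x]\cup N[\widetilde y]\to N[x]\cup N[y]$, using \cref{lem:local-distance} to rule out cross-coincidences; they then note that the masses and all source-to-target transport costs are preserved, and pass to the limit $\alpha\to1^-$. Your extra remarks, on injectivity within a single closed neighborhood and on only source-to-target distances entering the cost, make explicit what the paper uses implicitly.
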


\begin{proof}
By the neighborhood-bijection condition in \cref{def:graph-cover}, a graph
cover preserves degrees and induces bijections
\[
N[\widetilde x]\longrightarrow N[x],
\qquad
N[\widetilde y]\longrightarrow N[y].
\]
These two bijections agree on their overlap.  Indeed, if
$\widetilde u\in N[\widetilde x]$ and
$\widetilde v\in N[\widetilde y]$ satisfy
$q(\widetilde u)=q(\widetilde v)$, then
\cref{lem:local-distance} gives
\[
d_{\widetilde G}(\widetilde u,\widetilde v)
=
d_G(q(\widetilde u),q(\widetilde v))
=0,
\]
so $\widetilde u=\widetilde v$.  Thus the two neighborhood bijections combine
to a bijection
\[
N[\widetilde x]\cup N[\widetilde y]
\longrightarrow
N[x]\cup N[y].
\]

Under this bijection, the lazy random-walk measures have the same masses
because degrees are preserved.  By \cref{lem:local-distance}, every distance
between a source in $N[\widetilde x]$ and a target in
$N[\widetilde y]$ equals the corresponding distance in $G$.  Hence the two
optimal-transport problems have the same marginals and the same transportation
costs.  Their Wasserstein distances, and therefore their
$\alpha$-Ollivier curvatures, are equal.  For $\alpha<1$, divide this equality
by $1-\alpha$ and let $\alpha\to1^-$ to obtain the Lin--Lu--Yau curvature
equality.
\end{proof}

\subsection{The universal cover and the fundamental-group bound}

We now apply local curvature preservation to the universal cover of
$\Xshort{5}(G)$.  The objective is to show that this universal cover has
finitely many vertices.  Positive Lin--Lu--Yau curvature gives a uniform
diameter bound by Bonnet--Myers, while the fact that the cover lies over a
finite graph gives a uniform degree bound.  These two facts force the cover to
be finite, and hence force $\pi_1(\Xshort{5}(G),o)$ to be finite for every
base vertex $o$.

\begin{proposition}[Short-cycle fundamental-group bound]\label{prop:short-cycle-pi1}
Let $G$ be a finite connected simple graph with at least one edge and
$\kappa_{\min}(G)>0$.  Put
\[
n:=|V(G)|,
\qquad
\Delta:=\max_{v\in V(G)}d_v,
\qquad
R:=\left\lfloor\frac{2}{\kappa_{\min}(G)}\right\rfloor.
\]
Then, for every base vertex $o\in V(G)$,
$\pi_1(\Xshort{5}(G),o)$ is finite and
\begin{equation}\label{eq:pi1-bound}
|\pi_1(\Xshort{5}(G),o)|
\le
\frac{1+\Delta\sum_{j=0}^{R-1}(\Delta-1)^j}{n}.
\end{equation}
Here $(\Delta-1)^0$ is interpreted as $1$, also when $\Delta=1$.
\end{proposition}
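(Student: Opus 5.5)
We will work on the universal cover $p:\widetilde X\to\Xshort{5}(G)$ and show that it has at most $1+\Delta\sum_{j=0}^{R-1}(\Delta-1)^j$ vertices. Since $G$ is connected, $\Xshort{5}(G)$ is a connected CW complex, so its universal cover exists, is connected, and carries the lifted CW structure. By \cref{lem:CW-graph-cover}, the one-skeleton $\widetilde G:=\widetilde X^{(1)}$ is connected, and $q:=p|_{\widetilde G}:\widetilde G\to G$ is a $5$-cycle-preserving graph cover. Neighborhood bijections imply that $\widetilde G$ is simple, locally finite, and has maximum degree $\Delta$.

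By \cref{prop:cover-curvature}, every edge $\{\widetilde x,\widetilde y\}$ of $\widetilde G$ satisfies $\kappa^{\widetilde G}(\widetilde x,\widetilde y)=\kappa^G(x,y)\ge\kappa_{\min}(G)>0$. Then \cref{lem:LLY-basic}(ii), applied to the connected locally finite graph $\widetilde G$ with $\kappa_0=\kappa_{\min}(G)$, gives $\diam(\widetilde G)\le 2/\kappa_{\min}(G)$. Because distances are integers, $\diam(\widetilde G)\le R$.

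Fix a vertex $\widetilde o$ above $o$. Every vertex of $\widetilde G$ lies in the ball of radius $R$ about $\widetilde o$. The standard count for a graph of maximum degree $\Delta$ bounds the sphere of radius $j\ge1$ by $\Delta(\Delta-1)^{j-1}$: the center has at most $\Delta$ neighbors, and each later vertex has at least one neighbor one step closer. Hence
\[
|V(\widetilde G)|\le 1+\Delta\sum_{j=0}^{R-1}(\Delta-1)^j .
\]
When $\Delta=1$ we have $G=K_2$, and the convention $(\Delta-1)^0=1$ still yields a valid bound. When $R=0$ the sum is empty, but $\kappa\le2$ on edges (for instance via \eqref{eq:curvature-LP}), so $R\ge1$ anyway.

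Finally, the deck group of the universal cover is isomorphic to $\pi_1(\Xshort{5}(G),o)$ and acts freely and transitively on each vertex fiber $q^{-1}(v)$. So every fiber has cardinality $|\pi_1|$, and $|V(\widetilde G)|=n\,|\pi_1(\Xshort{5}(G),o)|$. Dividing the ball estimate by $n$ gives \eqref{eq:pi1-bound}, and finiteness follows.

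The main obstacle is applying Bonnet--Myers to the possibly infinite graph $\widetilde G$. \cref{lem:LLY-basic}(ii) is stated for connected locally finite graphs, so it applies once the uniform curvature lower bound on every edge of $\widetilde G$ is verified. That verification is exactly the role of local curvature preservation, and it depends on $5$-cycle preservation: filling only $3$- and $4$-cycles would not preserve the distances in \cref{lem:local-distance}. The remaining steps are standard covering-space facts and counting.
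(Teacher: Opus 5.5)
Your proposal is correct and follows the paper's proof essentially step for step. Both arguments pass to the universal cover of $\Xshort{5}(G)$, transfer the curvature bound to its one-skeleton via \cref{lem:CW-graph-cover} and \cref{prop:cover-curvature}, apply Bonnet--Myers together with the layer count to bound the number of vertices, and divide by $n$ using equal fiber sizes. The only difference is cosmetic: you obtain $|p^{-1}(v)|=|\pi_1(\Xshort{5}(G),o)|$ from the free transitive deck-group action, while the paper uses the index theorem at $o$ and path lifting between fibers; your remarks on $\Delta=1$ and $R\geq1$ are harmless additions.
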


\begin{proof}
Since $G$ is finite and connected, $\Xshort{5}(G)$ is a connected finite CW
complex.  In particular, it is locally path connected and semilocally simply
connected, so the standard existence theorem for covering spaces gives a
universal covering; see \cite[Section~1.3]{Hatcher2002}.  Let
\[
p:\widetilde X\longrightarrow\Xshort{5}(G)
\]
be such a covering, and let
\[
\widetilde G:=\widetilde X^{(1)}
\]
be its one-skeleton.
The restriction of $p$ to $\widetilde G$ is a connected
graph cover of $G$, and it is $5$-cycle preserving, by
\cref{lem:CW-graph-cover}.  Moreover, \cref{def:graph-cover} gives
\[
\deg_{\widetilde G}(\widetilde v)
=\deg_G(p(\widetilde v))
\leq\Delta
\qquad(\widetilde v\in V(\widetilde G)),
\]
so $\widetilde G$ is locally finite.

For every edge $\{\widetilde a,\widetilde b\}\in E(\widetilde G)$,
\cref{prop:cover-curvature} gives
\[
\kappa^{\widetilde G}(\widetilde a,\widetilde b)
=
\kappa^G(p(\widetilde a),p(\widetilde b))
\ge
\kappa_{\min}(G)>0.
\]
The Lin--Lu--Yau Bonnet--Myers estimate therefore yields
\[
\diam(\widetilde G)
\le
\frac{2}{\kappa_{\min}(G)}.
\]
Since graph distances are integral, every vertex of $\widetilde G$ lies in
the ball of radius $R$ around any fixed vertex.

The same degree equality from \cref{def:graph-cover} shows that the maximum
degree of $\widetilde G$ is $\Delta$.  Fix $\widetilde o\in V(\widetilde G)$
and let $S_i$ be the set of vertices at distance $i$ from $\widetilde o$.
Then $|S_0|=1$, $|S_1|\leq\Delta$, and, for $i\geq1$, every vertex of $S_i$
has a neighbor in $S_{i-1}$ and therefore at most $\Delta-1$ neighbors in
$S_{i+1}$.  Counting the edges between consecutive distance layers gives
\[
|S_{i+1}|\leq(\Delta-1)|S_i|,
\qquad
|S_i|\leq\Delta(\Delta-1)^{i-1}\quad(i\geq1).
\]
Since $S_i=\varnothing$ for $i>R$, summing these bounds gives
\begin{equation}\label{eq:universal-cover-BFS}
|V(\widetilde G)|
\le
1+\Delta\sum_{j=0}^{R-1}(\Delta-1)^j.
\end{equation}
Thus $V(\widetilde G)$ is finite, so the covering has finitely many sheets.
Let $s=|p^{-1}(o)|$ be their number.  By the covering-space index theorem
\cite[Proposition~1.32]{Hatcher2002},
\[
s=
\bigl[\pi_1(\Xshort{5}(G),o):
p_*\pi_1(\widetilde X,\widetilde o)\bigr]
=|\pi_1(\Xshort{5}(G),o)|,
\]
because $\widetilde X$ is simply connected.  For every $v\in V(G)$, lifting a
fixed path from $o$ to $v$ gives a bijection
$p^{-1}(o)\to p^{-1}(v)$.  Thus every vertex fiber has $s$ elements, and the
$n$ vertex fibers partition $V(\widetilde G)$.  Consequently,
\[
|\pi_1(\Xshort{5}(G),o)|
=s=\frac{|V(\widetilde G)|}{n}.
\]
Dividing \eqref{eq:universal-cover-BFS} by $n$ proves
\eqref{eq:pi1-bound}.
\end{proof}

The bound has the following immediate formulation in GLMY terms.

\begin{corollary}[Five-cycle normal generation up to a finite quotient]
\label{cor:GLMY-pi1-pentagonal}
Under the hypotheses of \cref{prop:short-cycle-pi1}, for every base vertex
$o\in V(G)$,
\[
\pi_1^{\mathrm{GLMY}}(G,o)/N_5(G,o)
\]
is finite and satisfies the bound in \eqref{eq:pi1-bound}.
\end{corollary}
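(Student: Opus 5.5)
The corollary follows by combining \cref{prop:short-cycle-pi1} with the identification \eqref{eq:pentagonal-normal-quotient}. First, I would fix the base vertex $o$ and a spanning tree $T$ of $G$, and recall the presentation of $\pi_1(\Xshort{4}(G),o)$. It has one generator for each non-tree edge, and one relator for each $2$-cell attached along a simple $3$- or $4$-cycle. By \eqref{eq:GLMY-pi1-X4}, this group is naturally isomorphic to $\pi_1^{\mathrm{GLMY}}(G,o)$.

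Next, I would pass from $\Xshort{4}(G)$ to $\Xshort{5}(G)$. The complex $\Xshort{5}(G)$ is obtained from $\Xshort{4}(G)$ by attaching one further $2$-cell along each simple $5$-cycle. By the standard van Kampen description of attaching $2$-cells (Hatcher, Proposition~1.26), the inclusion induces a surjection
\[
\pi_1(\Xshort{4}(G),o)\longrightarrow\pi_1(\Xshort{5}(G),o).
\]
Its kernel is the normal closure of the attaching loops, each joined to $o$ by a path. Taking that path to be the tree path from $o$ to a chosen vertex of the $5$-cycle gives exactly the based $5$-cycle loops that generate $N_5(G,o)$. The paper already notes that $N_5(G,o)$ does not depend on these choices. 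Transporting the kernel through \eqref{eq:GLMY-pi1-X4} therefore yields \eqref{eq:pentagonal-normal-quotient}.

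Finally, the hypotheses of the corollary are those of \cref{prop:short-cycle-pi1}. That proposition shows $\pi_1(\Xshort{5}(G),o)$ is finite and satisfies \eqref{eq:pi1-bound}. The quotient $\pi_1^{\mathrm{GLMY}}(G,o)/N_5(G,o)$ is isomorphic to this group, so it inherits both the finiteness and the same bound.

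The only point requiring care is that the isomorphism \eqref{eq:GLMY-pi1-X4} is compatible with the explicit loops defining $N_5$. Concretely, a based edge loop in $G$ must be sent to its homotopy class in $\Xshort{4}(G)$ under this isomorphism. I would check this through the spanning-tree presentation described in \cref{sec:prelim}, where both groups have the same generators and the same $3$- and $4$-cycle relators. Granting that compatibility, the argument is immediate.
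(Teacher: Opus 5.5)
Your proposal is correct and follows the paper's own argument, which consists of combining \eqref{eq:pentagonal-normal-quotient} with \cref{prop:short-cycle-pi1}. Your van Kampen derivation of \eqref{eq:pentagonal-normal-quotient} via Hatcher's Proposition~1.26 just spells out what the paper asserts in \cref{sec:prelim}. The same holds for your check that \eqref{eq:GLMY-pi1-X4} sends based edge loops to their topological classes.
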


Indeed, this is exactly \eqref{eq:pentagonal-normal-quotient} together with
\cref{prop:short-cycle-pi1}.

\section{Examples and an extremal curvature problem}
\label{sec:examples-extremal}

This section presents examples that clarify the sharpness and scope of the
preceding results.  The five-cycle attains the curvature threshold in
\cref{thm:half-vanishing} and shows that cycles of length five are necessary
in \cref{prop:short-cycle-pi1}.  The graph $G_7$ shows that positive
Lin--Lu--Yau curvature is compatible with nonzero first GLMY path homology and an
infinite nonabelian GLMY fundamental group.  Cartesian products of copies of
$C_5$ provide positively curved graphs with nonzero higher-dimensional GLMY
path homology.  The Cartesian-product examples provide lower bounds for the
extremal curvature quantity introduced in the final subsection.

\subsection{The sharp five-cycle}\label{sec:examples}

The five-cycle simultaneously shows that the constant in
\cref{thm:half-vanishing} is sharp and that the filling length five in
\cref{prop:short-cycle-pi1} cannot be reduced to four.

Let $C_5$ have cyclically ordered vertices $0,1,2,3,4$, and take $0$ as the
base vertex.  It contains no simple $3$- or $4$-cycle, and hence
\[
\Xshort{4}(C_5)=C_5.
\]
By \eqref{eq:GLMY-pi1-X4},
\[
\pi_1^{\mathrm{GLMY}}(C_5,0)
\cong\pi_1(\Xshort{4}(C_5),0)
=\pi_1(C_5,0)
\cong\Z.
\]
The unique simple $5$-cycle represents a generator up to inversion and hence
normally generates the whole group.  Thus
\[
N_5(C_5,0)=\pi_1^{\mathrm{GLMY}}(C_5,0),
\qquad
\pi_1^{\mathrm{GLMY}}(C_5,0)/N_5(C_5,0)=1.
\]
Equivalently, $\Xshort{5}(C_5)$ is a disk, so
$\pi_1(\Xshort{5}(C_5),0)=1$.

Apply \eqref{eq:curvature-LP} to the edge $\{0,1\}$, with $g(0)=1$ and $g(1)=0$.
Since both degrees are two,
\[
\av_0g-\av_1g
=\frac{g(1)+g(4)-g(0)-g(2)}2
=\frac{g(4)-g(2)-1}{2}.
\]
The vertices $4$ and $2$ are at distance two, so
$g(4)-g(2)\leq2$.  Equality is attained, for example, by
\[
(g(0),g(1),g(2),g(3),g(4))=(1,0,0,1,2).
\]
Thus the maximum in \eqref{eq:curvature-LP} is $1/2$, and vertex
transitivity gives
\[
\kappa_{\mathrm{LLY}}(e)=\frac12
\qquad(e\in E(C_5)).
\]
Thus $\pi_1^{\mathrm{GLMY}}(C_5,0)\cong\Z$ although every edge of $C_5$ has positive
Lin--Lu--Yau curvature, whereas
$\pi_1(\Xshort{5}(C_5),0)=1$.  In particular, cycles of length five cannot be
omitted from the filling complex in \cref{prop:short-cycle-pi1}.

\subsection{A positively curved seven-vertex graph}
\label{sec:G7}

Let $G_7$ have vertex set $\{0,1,\ldots,6\}$ and edge set
\begin{equation}\label{eq:G7-edges}
\begin{split}
E(G_7)=\bigl\{&
\{0,1\},\{0,4\},\{1,2\},\{1,6\},\{2,3\},\\
&\{3,4\},\{3,5\},\{4,5\},\{5,6\}\bigr\}.
\end{split}
\end{equation}
A drawing is given in \cref{fig:G7}.

\begin{figure}[htbp]
\centering
\begin{tikzpicture}[
  scale=1.05,
  vertex/.style={circle,fill=black,inner sep=1.7pt}
]
\node[vertex,label=left:$0$]  (v0) at (-1.8, 1.15) {};
\node[vertex,label=left:$1$]  (v1) at (-1.8,-0.85) {};
\node[vertex,label=below:$2$] (v2) at ( 0.0,-2.00) {};
\node[vertex,label=right:$3$] (v3) at ( 1.8,-0.85) {};
\node[vertex,label=right:$4$] (v4) at ( 1.8, 1.15) {};
\node[vertex,label=above:$6$] (v6) at (-0.75,0.10) {};
\node[vertex,label=above:$5$] (v5) at ( 0.75,0.10) {};
\draw (v0)--(v1)--(v2)--(v3)--(v4)--(v0);
\draw (v3)--(v5)--(v4);
\draw (v1)--(v6)--(v5);
\end{tikzpicture}
\caption{The graph $G_7$.  Its unique simple $3$-cycle class vanishes in first
GLMY path homology, while two independent simple $5$-cycle classes remain.}
\label{fig:G7}
\end{figure}

We write
\[
F_2:=\langle a,b\mid\ \rangle
\]
for the free group of rank two; this should not be confused with the
two-element field $\mathbb F_2$.

\begin{proposition}\label{prop:G7}
The graph $G_7$ satisfies
\[
\kappa_{\min}^{\mathrm{LLY}}(G_7)=\frac16,
\qquad
\PathH_1(G_7;\R)\cong\R^2,
\qquad
\pi_1^{\mathrm{GLMY}}(G_7,0)\cong F_2.
\]
Moreover,
\[
N_5(G_7,0)=\pi_1^{\mathrm{GLMY}}(G_7,0),
\qquad
\pi_1^{\mathrm{GLMY}}(G_7,0)/N_5(G_7,0)=1.
\]
\end{proposition}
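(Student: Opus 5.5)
The plan is to verify the three invariants separately, using the presentations recalled in \cref{sec:prelim}, and then to handle the curvature by a finite computation with \eqref{eq:curvature-LP}.

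\emph{Short cycles and the topological invariants.} First I enumerate the simple cycles of $G_7$ of length at most five.
\begin{itemize}[label=--]
\item The graph has $7$ vertices and $9$ edges, so $\dim\cC(G_7;\R)=3$.
\item The only triangle is $(3,4,5)$.
\item There is no simple $4$-cycle. I will check this by listing common neighbours of non-adjacent pairs: any two vertices at distance two have at most one common neighbour, apart from pairs inside the triangle.
\item The simple $5$-cycles include $P_1=(0,1,2,3,4)$, $P_2=(0,1,6,5,4)$ and $(1,2,3,5,6)$.
\end{itemize}
By \cref{lem:H1-cycle-space}, $\PathH_1(G_7;\R)\cong\R^3/\R\,\mathbf c_{(3,4,5)}\cong\R^2$.

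For the fundamental group I use \eqref{eq:GLMY-pi1-X4}. The complex $\Xshort{4}(G_7)$ is $G_7$ with a single disk glued along the triangle. Collapsing that disk through the free edge $\{3,5\}$ gives a homotopy equivalence to the graph $G_7\setminus\{3,5\}$. That graph is connected with $7$ vertices and $8$ edges, so its fundamental group is free of rank $2$, and $\pi_1^{\mathrm{GLMY}}(G_7,0)\cong F_2$.

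For $N_5$, I take the spanning tree with edges $01,12,23,34,16,65$. The non-tree edges are $04$, $35$ and $45$. In the presentation with the $2$-cell relation of the triangle, the edge $35$ is eliminated. The remaining generators $a=[04]$ and $b=[45]$ are, up to tree conjugation and inversion, exactly the based loops of $P_1$ and $P_2$. Hence $N_5(G_7,0)$ contains both free generators, so it is the whole group and $\pi_1^{\mathrm{GLMY}}(G_7,0)/N_5(G_7,0)=1$; equivalently $\Xshort{5}(G_7)$ is simply connected.

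\emph{Curvature.} Using \eqref{eq:curvature-LP}, I compute $\kappa(x,y)$ for each of the nine edges. Any symmetries of $G_7$ should reduce the number of cases, but I do not expect many.
\begin{itemize}[label=--]
\item \emph{Upper bound.} For each edge I exhibit an explicit $g\in\mathcal F_{x,y}$ on $N[x]\cup N[y]$, checked to be $1$-Lipschitz against the local distance table. It then extends by McShane, exactly as in the proof of \cref{thm:half-vanishing}. The value $\av_xg-\av_yg$ gives an upper bound on $\kappa(x,y)$.
\item \emph{Lower bound.} The optimal value of the LP equals, by duality, the minimal transport cost between the limiting measures, with a $1/(1-\alpha)$ factor as $\alpha\to1$. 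So I exhibit an explicit transport plan matching each $1/d_x$ mass to a $1/d_y$ mass. Since all degrees are $2$ or $3$, the masses are $1/2$ and $1/3$ and the plans are small. Its cost certifies that the maximum in \eqref{eq:curvature-LP} is no larger.
\end{itemize}
The goal is to show that every edge has curvature at least $1/6$, and that some edge between two degree-$3$ vertices lying on two pentagons attains exactly $1/6$. Natural candidates are an edge such as $\{1,6\}$ or $\{4,5\}$; the first thing I will check is which one is extremal.

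The main obstacle is this curvature bookkeeping. It requires the full distance matrix of $G_7$ and matching primal and dual certificates for each edge. Getting the exact extremal value $1/6$, rather than merely positivity, requires locating the tight edge correctly. If guessing fails, I will first solve the small LP on the least symmetric edges directly.
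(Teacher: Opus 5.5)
\paragraph{The curvature computation.} Your plan does not compute the curvature, which is the substantive part of the statement: $\kappa_{\min}^{\mathrm{LLY}}(G_7)=1/6$. The lower-bound certificate you describe would also fail.

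A transport plan matching the $1/d_x$ masses to the $1/d_y$ masses bounds $W_1(\mu_x^0,\mu_y^0)$. So at best it certifies $\kappa(x,y)\ge\kappa_0(x,y)$, and here the non-lazy curvature is far below the Lin--Lu--Yau curvature. For the edge $\{0,1\}$, the $1$-Lipschitz function $(1,1,0,1,2,1,0)$ gives $W_1(\mu_0^0,\mu_1^0)\ge 3/2-1/3=7/6$. Hence the best bound such a plan can certify is $\kappa(0,1)\ge-1/6$, whereas the true value is $1/6$.

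The correct dual of \eqref{eq:curvature-LP} keeps idle mass. Write $\delta_v$ for the unit mass at $v$. For every $g\in\mathcal F_{x,y}$, every $s\ge0$, and every coupling $\pi$ of $\mu_x^0+s\delta_x$ and $\mu_y^0+s\delta_y$, one has $\av_xg-\av_yg\le\sum_{u,v}d(u,v)\pi(u,v)-s$. This is equivalent to the lazy bound $\kappa\ge\kappa_\alpha/(1-\alpha)$ with $\alpha=s/(1+s)$.

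For $\{0,1\}$ take $s=1/2$ and use the following plan:
\begin{itemize}
\item send mass $1/4$ from $4$ to each of $2$ and $6$;
\item send mass $1/12$ from $1$ to each of $2$ and $6$;
\item send mass $1/6$ from $0$ to $1$;
\item keep the rest in place.
\end{itemize}
The cost is $4/3$, so the maximum is at most $5/6$. Together with a maximizer such as $(1,0,-1,0,1,0,-1)$, this gives $\kappa(0,1)=1/6$. The paper certifies optimality differently: the constraint matrix is totally unimodular, so an integral maximizer exists, and a finite enumeration finishes the argument.

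\paragraph{The extremal edge and symmetry.} Your guess for the extremal edge is wrong. The only edges joining two degree-$3$ vertices are the triangle edges. For $\{3,4\}$ the objective is $(g(2)-g(0)-1)/3\le 1/3$, so their curvature is $2/3$. The value $1/6$ occurs exactly on the six edges meeting a degree-$2$ vertex.

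Contrary to your expectation, $G_7$ is quite symmetric. It is $K_4$ on $\{1,3,4,5\}$ with the three edges at $1$ subdivided by $0,2,6$. So $S_3$ acts with three edge orbits, and only $\{0,1\}$, $\{0,4\}$, $\{3,4\}$ need computing.

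\paragraph{The topological part.} This part is fine. Collapsing the triangle through the free edge $\{3,5\}$ is a clean alternative to the paper's maximal-tree presentation.

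One slip: with your tree, the based loop of $P_2=(0,1,6,5,4)$ is $b^{-1}a^{-1}=(ab)^{-1}$, not a conjugate of $b^{\pm1}$. The conclusion $N_5(G_7,0)=\pi_1^{\mathrm{GLMY}}(G_7,0)$ survives, because $a$ and $ab$ generate $F_2$. Alternatively, the loop of $(1,2,3,5,6)$ is the generator of $\{3,5\}$, which equals $b$ modulo the triangle relation.
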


\begin{proof}
The graph has $9-7+1=3$-dimensional cycle space.  Its unique simple $3$-cycle is
\[
C_3=(3,4,5),
\]
and it has no simple $4$-cycle.  Consider the simple $5$-cycles
\[
C_5^{(1)}=(0,1,2,3,4),
\quad
C_5^{(2)}=(1,2,3,5,6),
\quad
C_5^{(3)}=(0,1,6,5,4).
\]
Their cycle vectors span the full three-dimensional cycle space.  Therefore
\[
\beta_1^{\mathrm{GLMY}}(G_7;\R)=3-1=2.
\]

Solving the finite linear programs in \eqref{eq:curvature-LP} gives
\[
\kappa(e)=
\begin{cases}
2/3,&e\in\bigl\{\{3,4\},\{3,5\},\{4,5\}\bigr\},\\
1/6,&e\in\bigl\{\{0,1\},\{0,4\},\{1,2\},\{1,6\},\{2,3\},\{5,6\}\bigr\}.
\end{cases}
\]
For exact certificates, orient the edges as listed in
\eqref{eq:G7-edges}.  The following maximizing functions are written as the
vectors $(g(0),\ldots,g(6))$.  Direct substitution verifies the edge
Lipschitz constraints, the normalization $g(x)=1$, $g(y)=0$, and the displayed
objective values.
\[
\begin{array}{c@{\qquad}c@{\qquad}c}
\toprule
(x,y)&(g(0),\ldots,g(6))&\av_xg-\av_yg\\
\midrule
(0,1)&(1,0,-1,0,1,0,-1)&5/6\\
(0,4)&(1,1,0,-1,0,-1,0)&5/6\\
(1,2)&(2,1,0,0,1,1,2)&5/6\\
(1,6)&(2,1,2,1,1,0,0)&5/6\\
(2,3)&(0,1,1,0,-1,-1,0)&5/6\\
(3,4)&(0,1,2,1,0,0,0)&1/3\\
(3,5)&(1,1,2,1,0,0,0)&1/3\\
(4,5)&(2,1,1,0,1,0,0)&1/3\\
(5,6)&(1,0,1,2,2,1,0)&5/6\\
\bottomrule
\end{array}
\]
Optimality can be checked without floating-point arithmetic.  The constraint
matrix consists of a signed incidence matrix together with the two endpoint
normalizations and is totally unimodular.  Hence every vertex of the bounded
normalized feasible polytope is integral.  The bounds
$|g(v)-1|\leq d(v,x)$ leave finitely many integral labelings; enumerating
those satisfying the edge inequalities gives maxima $5/6$ on the six
edges outside $C_3$ and $1/3$ on the three edges of $C_3$.  Thus
\eqref{eq:curvature-LP} gives the stated curvatures using exact rational
arithmetic.

To compute $\pi_1^{\mathrm{GLMY}}(G_7,0)$, take $0$ as the base vertex and
choose the spanning tree
\[
T=\bigl\{\{0,1\},\{1,2\},\{2,3\},\{3,4\},\{3,5\},\{5,6\}\bigr\}.
\]
The three non-tree edges are $\{0,4\}$, $\{1,6\}$, and $\{4,5\}$.  Orient
them respectively as $0\to4$, $1\to6$, and $4\to5$.  The corresponding based
loops are
\[
\begin{aligned}
a&=0\,4\,3\,2\,1\,0,\\
b&=0\,1\,6\,5\,3\,2\,1\,0,\\
c&=0\,1\,2\,3\,4\,5\,3\,2\,1\,0.
\end{aligned}
\]
The only $3$- or $4$-cycle relation is contributed by $C_3$.  After the tree
edges $\{3,4\}$ and $\{3,5\}$ are collapsed, its boundary word is
$c^{\pm1}$.  Hence the maximal-tree presentation is
\[
\pi_1^{\mathrm{GLMY}}(G_7,0)
\cong
\langle a,b,c\mid c\rangle
\cong
\langle a,b\mid\ \rangle
\cong F_2.
\]

The cycles $C_5^{(1)}$ and $C_5^{(2)}$ represent $a^{\pm1}$ and a conjugate
of $b^{\pm1}$, respectively.  They therefore normally generate $F_2$, which
proves
\[
N_5(G_7,0)=\pi_1^{\mathrm{GLMY}}(G_7,0)\cong F_2,
\qquad
\pi_1^{\mathrm{GLMY}}(G_7,0)/N_5(G_7,0)=1.
\]
Finally, abelianization gives
\[
\bigl(\pi_1^{\mathrm{GLMY}}(G_7,0)\bigr)_{\mathrm{ab}}
\cong F_2^{\mathrm{ab}}
\cong\Z^2,
\]
and \eqref{eq:GLMY-abelianization}, after tensoring with $\R$, identifies this
with $\PathH_1(G_7;\R)\cong\R^2$.
\end{proof}

Thus positive Lin--Lu--Yau curvature is compatible with nonzero first GLMY
path homology and an infinite nonabelian GLMY fundamental group.  At the same time,
the $5$-cycle normal subgroup is the whole group, in agreement with
\cref{cor:GLMY-pi1-pentagonal}.

\subsection{Positive curvature and higher-dimensional GLMY path homology}\label{sec:products}

Strict positivity of Lin--Lu--Yau curvature does not imply vanishing of
GLMY path homology in degrees $p\geq2$.  For each integer $r\geq1$, consider the
Cartesian product $T_r=C_5^{\square r}$ of $r$ copies of $C_5$.  As shown
below, every edge of $T_r$ has curvature $1/(2r)$.  Thus the minimum curvature
decreases as the number $r$ of factors increases.  This construction does not
address whether vanishing follows from a positive curvature lower bound that
depends on the homological degree.

For graphs $G$ and $H$, their Cartesian product $G\square H$ has vertex set $V(G)\times V(H)$, with
\[
(g,h)\sim(g',h')
\]
when either $g=g'$ and $\{h,h'\}\in E(H)$, or $h=h'$ and
$\{g,g'\}\in E(G)$.

The K\"unneth theorem for GLMY path homology
\cite[Theorem~4.7]{GrigoryanMuranovYau2017} states that, for finite digraphs
$D_1,D_2$ and a field $\F$,
\begin{equation}\label{eq:path-kunneth}
\PathH_p(D_1\square D_2;\F)
\cong
\bigoplus_{a+b=p}
\PathH_a(D_1;\F)\otimes_{\F}\PathH_b(D_2;\F).
\end{equation}
The isomorphism is induced by the cross product on the path chain complexes.
For undirected graphs, replacing every edge by the two opposite arrows
identifies the bidirected version of $G\square H$ with the digraph Cartesian
product of the bidirected versions of $G$ and $H$.  Thus
\eqref{eq:path-kunneth} applies to the products considered below.

\begin{theorem}[Cartesian products of five-cycles]\label{thm:pentagonal-tori}
Let
\[
T_r=C_5^{\square r},\qquad r\geq1.
\]
Then:
\begin{enumerate}[label=\textup{(\roman*)}]
\item $T_r$ has $5^r$ vertices, and every vertex has degree $2r$;
\item every edge of $T_r$ has Lin--Lu--Yau curvature $1/(2r)$;
\item for every field $\F$,
\[
\PathH_p(T_r;\F)
\cong
\begin{cases}
\F^{\binom rp},&0\leq p\leq r,\\
0,&p>r.
\end{cases}
\]
\end{enumerate}
\end{theorem}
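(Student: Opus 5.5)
Part (i) is immediate from the definition of the Cartesian product: the vertex set is $V(C_5)^r$, of size $5^r$, and a vertex $(a_1,\ldots,a_r)$ is adjacent exactly to the $2r$ vertices obtained by changing one coordinate $a_i$ to $a_i\pm1 \pmod 5$.

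For (iii) I would induct on $r$ using the K\"unneth formula \eqref{eq:path-kunneth}, together with the observation after it that the bidirected product is the digraph product of the bidirected factors. The base case is $C_5$. There $\PathH_0(C_5;\F)\cong\F$ because the graph is connected. By \cref{lem:H1-cycle-space}, $\PathH_1(C_5;\F)\cong\cC(C_5;\F)/\cC_{\le4}(C_5;\F)=\F/0\cong\F$, since $C_5$ has no simple $3$- or $4$-cycles. For $p\ge2$ I would show $\Om_p(C_5;\F)=0$. An allowed $2$-path $e_{abc}$ has $a\neq b$ and $b\neq c$, and its boundary contains the term $e_{ac}$. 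If $a\neq c$, then $d(a,c)=2$ and $e_{ac}$ is not allowed. The only elementary paths sharing this boundary term are other $e_{ab'c}$, but since $C_5$ has girth $5$ the vertex $b'$ is the unique common neighbour of $a$ and $c$. Hence no cancellation occurs, so $\Om_2$ is spanned by the triangles-in-time $e_{aba}$. But $\partial e_{aba}=e_{ba}-e_{aa}+e_{ab}$, and $e_{aa}=0$ as a nonregular path, so $\partial(\sum\lambda_{ab}e_{aba})=\sum\lambda_{ab}(e_{ab}+e_{ba})$, which is nonzero unless all $\lambda_{ab}=0$. Thus the kernel of $\partial$ on $\Om_2$ is zero and $\PathH_2(C_5)=0$. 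For $p\ge3$, $\PathH_p(C_5)=0$ as well; this holds for every graph with no squares or triangles, and I would cite it or prove it the same way: a boundary-invariant path must avoid distance-two jumps, which forces it into the span of the paths $e_{abab\ldots}$, on which $\partial$ is injective modulo lower terms. So the Poincaré polynomial of $C_5$ is $1+t$, and the K\"unneth formula gives Poincaré polynomial $(1+t)^r$ for $T_r$, which is exactly the statement $\F^{\binom rp}$.

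For (ii) the upper bound comes from a test function in \eqref{eq:curvature-LP}. By symmetry, take $x=(0,\mathbf a)$ and $y=(1,\mathbf a)$, differing in the first coordinate. Let $g(v)=g_0(v_1)$, where $g_0=(1,0,0,1,2)$ is the $C_5$ maximizer. This $g$ is $1$-Lipschitz, because the distance in $T_r$ is the sum of the coordinate distances. Neighbours changing other coordinates contribute $g(x)$ and $g(y)$ respectively, so
$\av_xg-\av_yg=\tfrac{1}{2r}\bigl(g_0(1)+g_0(4)+(2r-2)\cdot1\bigr)-\tfrac{1}{2r}\bigl(g_0(0)+g_0(2)\bigr)=\tfrac{2+2r-2-1}{2r}=1-\tfrac1{2r}$.
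Hence $\kappa\le 1/(2r)$.

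For the lower bound I would exhibit a transport plan for $W_1(\mu_x^\alpha,\mu_y^\alpha)$, or equivalently bound $\av_xg-\av_yg\le 1-1/(2r)$ for every admissible $g$. The plan is to pair neighbours. Each $x+e_i$ is paired with $y+e_i$ and each $x-e_i$ with $y-e_i$ for $i\ge2$; these pairs are adjacent, so each difference is at most $1$, giving total at most $2r-2$. For the remaining terms, $g(y)+g(x-e_1)-g(x)-g(y+e_1)$ splits into $g(y)-g(x)=-1$ plus $g(x-e_1)-g(y+e_1)\le d=2$, since $C_5$ has no $4$-cycle and so the distance is $2$ rather than $3$. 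The sum is therefore at most $2r-2-1+2=2r-1$, which gives $\kappa\ge1/(2r)$. The main obstacle I expect is verifying rigorously that $\Om_p(C_5)=0$ for $p\ge2$, or else locating an exact citation for that vanishing; the curvature computation is routine.
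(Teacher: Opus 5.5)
There is a genuine error in your base case for (iii). First, $\Omega_p(C_5;\F)$ is not zero for $p\geq2$. Every alternating path $e_{aba},e_{abab},\dots$ over an edge $\{a,b\}$ is allowed and has allowed boundary, and your own description of $\Omega_2$ already shows this. More seriously, $\partial$ is not injective on $\Omega_2=\operatorname{span}\{e_{aba}\}$. Both $e_{aba}$ and $e_{bab}$ have boundary $e_{ab}+e_{ba}$. So the coefficient of $e_{ab}$ in $\partial\bigl(\sum\lambda_{ab}e_{aba}\bigr)$ is $\lambda_{ab}+\lambda_{ba}$, and $e_{aba}-e_{bab}$ is a nonzero $2$-cycle for each of the five edges. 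The same thing happens in every degree (e.g.\ $\partial(e_{abab}+e_{baba})=0$), so your claim that $\partial$ is injective modulo lower terms on alternating paths is also false.

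The missing step is to show that these cycles are boundaries. Write $A_p$ and $B_p$ for the alternating $p$-paths over $\{a,b\}$ beginning at $a$ and at $b$. All interior deletions produce nonregular paths, so
\[
\partial A_p=B_{p-1}+(-1)^pA_{p-1},
\qquad
\partial B_p=A_{p-1}+(-1)^pB_{p-1}
\qquad(p\geq1).
\]
Thus for $p\geq2$ the boundary has rank one on each edge block. Its kernel there is spanned by $A_p-(-1)^pB_p$, which is exactly the image of the block in degree $p+1$; for instance $\partial e_{abab}=e_{bab}-e_{aba}$. You correctly observed that, since $C_5$ has no $3$- or $4$-cycles, $\Omega_p$ for $p\geq2$ is the direct sum of these edge blocks. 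Combined with the rank computation, this gives $\PathH_p(C_5;\F)=0$ for $p\geq2$, hence $P_{C_5}(t;\F)=1+t$. The paper sidesteps this computation by citing the known path homology of bidirected cycles, and then applies the K\"unneth formula exactly as you do.

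Part (i) is fine, and so is (ii), by a route different from the paper's. The paper quotes the Lin--Lu--Yau Cartesian-product formula for regular graphs, $\kappa_{G\square H}=\frac{d_G}{d_G+d_H}\kappa_G$ on edges in the $G$-direction, and iterates it starting from $\kappa(C_5)=1/2$. You instead verify both inequalities directly in \eqref{eq:curvature-LP}. The lifted $C_5$ maximizer $g_0(v_1)$ gives $\kappa\leq1/(2r)$, and pairing $x\pm e_i$ with $y\pm e_i$ gives $\kappa\geq1/(2r)$. This is self-contained and avoids the product formula, at the cost of being specific to $T_r$. One small correction: $d\bigl((4,\mathbf a),(2,\mathbf a)\bigr)=2$ because of the path through $(3,\mathbf a)$, not because $C_5$ lacks $4$-cycles.
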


\begin{proof}
Assertion~\textup{(i)} follows from the definition of the Cartesian product.

Lin, Lu, and Yau proved the Cartesian-product formula for regular graphs
\cite[Corollary~3.2]{LinLuYau2011}.  Suppose that $G$ and $H$ are regular of
degrees $d_G$ and $d_H$, respectively.  For an edge
$\{(g,h),(g',h)\}$ of $G\square H$ arising from
$\{g,g'\}\in E(G)$, the formula gives
\[
\kappa_{G\square H}\bigl((g,h),(g',h)\bigr)
=\frac{d_G}{d_G+d_H}\kappa_G(g,g').
\]
The corresponding formula for an edge on which only the $H$-coordinate
changes is obtained by interchanging $G$ and $H$.  Since every edge of $C_5$
has curvature $1/2$, iterating the formula gives
\[
\kappa(T_r)=\frac{1}{2r}.
\]

For a finite digraph $D$ whose GLMY path homology vanishes in sufficiently
high degrees, define the Poincar\'e polynomial of its GLMY path homology by
\[
P_D(t;\F):=\sum_{p\geq0}\dim_{\F}\PathH_p(D;\F)t^p.
\]
The computation of the GLMY path homology of bidirected cycles in
\cite{LiMuranovYau2026} gives
\[
P_{C_5}(t;\F)=1+t.
\]
By the K\"unneth theorem for GLMY path homology
\cite[Theorem~4.7]{GrigoryanMuranovYau2017}, Poincar\'e polynomials are
multiplicative under Cartesian products.  Therefore
\[
P_{T_r}(t;\F)=(1+t)^r
=\sum_{p=0}^{r}\binom rp t^p,
\]
which gives the claimed homology groups.
\end{proof}

\begin{corollary}[No positive-curvature vanishing in high degree]\label{cor:no-high-vanishing}
For every integer $N\geq1$ and every field $\F$, there exists a finite connected simple graph $G_N$ such that
\[
\kappa_{\min}(G_N)>0
\qquad\text{and}\qquad
\PathH_N(G_N;\F)\neq0.
\]
One may take $G_N=T_N=C_5^{\square N}$.
\end{corollary}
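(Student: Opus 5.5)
The corollary follows directly from \cref{thm:pentagonal-tori}, so I will specialize that theorem at $r=N$ and check the three required properties of $G_N:=T_N=C_5^{\square N}$ one by one.

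\emph{The graph is admissible.} $T_N$ is finite by \cref{thm:pentagonal-tori}\textup{(i)}, since it has $5^N$ vertices. It is simple because a Cartesian product of simple graphs is simple: the adjacency rule changes exactly one coordinate along an edge of a simple factor, so there are no loops or multiple edges. For connectedness I will induct on the number of factors. If $(g,h)$ and $(g',h')$ are two vertices of $G\square H$, then walking in the first coordinate from $g$ to $g'$ with $h$ fixed, and then in the second coordinate from $h$ to $h'$, joins them whenever $G$ and $H$ are connected. Since $C_5$ is connected, so is every $T_N$. Finally, $T_N$ has at least one edge because $N\ge1$ and every vertex has degree $2N$.

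\emph{Positive curvature.} By \cref{thm:pentagonal-tori}\textup{(ii)}, every edge has curvature exactly $1/(2N)$, so $\kappa_{\min}(T_N)=1/(2N)>0$.

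\emph{Nonvanishing homology.} By \cref{thm:pentagonal-tori}\textup{(iii)} with $p=N\le r=N$, we get $\PathH_N(T_N;\F)\cong\F^{\binom NN}=\F\neq0$. This holds for every field $\F$, because the theorem's Künneth computation \eqref{eq:path-kunneth} is stated over an arbitrary field.

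The only step needing any care is connectedness, since the theorem does not state it explicitly. It is the routine induction above, so I expect no real obstacle.
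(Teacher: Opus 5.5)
Your proposal is correct and is the paper's own route: the corollary is the specialization $r=N$ of \cref{thm:pentagonal-tori}, with part (ii) giving $\kappa_{\min}(T_N)=1/(2N)>0$ and part (iii) giving $\PathH_N(T_N;\F)\cong\F\neq0$ for every field. Your explicit checks that $T_N$ is finite, simple, connected and has an edge are routine details that the paper leaves implicit.
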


The preceding examples motivate the extremal problem formulated in the next
subsection.

\subsection{An extremal curvature problem}\label{sec:problems}

For $p\geq1$ and a field $\F$, define the Lin--Lu--Yau extremal quantity
\[
K_p^{\mathrm{LLY}}(\F)
=\sup\left\{\kappa_{\min}^{\mathrm{LLY}}(G):
\begin{array}{l}
G\text{ finite, connected, and simple,}\\
\PathH_p(G;\F)\neq0
\end{array}
\right\}.
\]
By \cref{thm:pentagonal-tori},
\[
K_p^{\mathrm{LLY}}(\F)\geq\frac1{2p}.
\]

\begin{problem}\label{prob:Kp}
Determine $K_p^{\mathrm{LLY}}(\F)$.  Is
$K_p^{\mathrm{LLY}}(\F)=1/(2p)$ for every $p$?  If so, classify the extremal
graphs and determine whether $C_5^{\square p}$ is rigid among extremizers.
\end{problem}

For real coefficients, \cref{thm:half-vanishing} gives
\[
\sup\{\kappa_{\min}^{\mathrm{LLY}}(G):
G\text{ finite and }\PathH_1(G;\R)\neq0\}=\frac12.
\]
The five-cycle attains the supremum.  The value of
$K_p^{\mathrm{LLY}}(\F)$ for
$p\geq2$ is not determined here.

\section*{Acknowledgments}

Shuliang Bai was supported by the National Natural Science Foundation of China
(NSFC, Grant No.~12301434).  Jingyan Li was supported by the Beijing Natural
Science Foundation (International Scientists Project, Grant No.~IS25081) and
the National Natural Science Foundation of China (NSFC, General Program, Grant
No.~82573048).

\bibliographystyle{amsplain}
\bibliography{bib}

\providecommand{\bysame}{\leavevmode\hbox to3em{\hrulefill}\thinspace}
\providecommand{\MR}{\relax\ifhmode\unskip\space\fi MR }
\providecommand{\MRhref}[2]{%
  \href{http://www.ams.org/mathscinet-getitem?mr=#1}{#2}
}
\providecommand{\href}[2]{#2}
\begin{thebibliography}{10}

\bibitem{GrigoryanJimenezMuranov2018}
Alexander Grigor'yan, Rolando Jimenez, and Yuri Muranov, \emph{Fundamental
  groupoids of digraphs and graphs}, Czechoslovak Math. J. \textbf{68} (2018),
  no.~1, 35--65.

\bibitem{GrigoryanLinMuranovYau2014}
Alexander Grigor'yan, Yong Lin, Yuri Muranov, and Shing-Tung Yau,
  \emph{Homotopy theory for digraphs}, Pure Appl. Math. Q. \textbf{10} (2014),
  no.~4, 619--674.

\bibitem{GrigoryanMuranovYau2017}
Alexander Grigor'yan, Yuri Muranov, and Shing-Tung Yau, \emph{Homologies of
  digraphs and {K}{\"u}nneth formulas}, Comm. Anal. Geom. \textbf{25} (2017),
  no.~5, 969--1018.

\bibitem{GrigoryanLinMuranovYau2020}
Alexander~A. Grigor'yan, Yong Lin, Yuri~V. Muranov, and Shing-Tung Yau,
  \emph{Path complexes and their homologies}, J. Math. Sci. \textbf{248}
  (2020), no.~5, 564--599.

\bibitem{Hatcher2002}
Allen Hatcher, \emph{Algebraic topology}, Cambridge University Press,
  Cambridge, 2002.

\bibitem{Hehl2025}
Moritz Hehl, \emph{Graphs with {Lin--Lu--Yau} curvature at least one and
  regular bone-idle graphs}, Calc. Var. Partial Differential Equations
  \textbf{64} (2025), Article 196.

\bibitem{Hehl2026}
\bysame, \emph{Regular graphs with positive {Ollivier--Ricci} curvature}, Math.
  Ann. \textbf{394} (2026), Article 24.

\bibitem{HehlMunch2025}
Moritz Hehl and Florentin M{\"u}nch, \emph{Betti number estimates for
  non-negatively curved graphs}, 2025, arXiv:2503.11222.

\bibitem{KemptonMunchYau2021}
Mark Kempton, Florentin M{\"u}nch, and Shing-Tung Yau, \emph{A homology
  vanishing theorem for graphs with positive curvature}, Comm. Anal. Geom.
  \textbf{29} (2021), no.~6, 1449--1473.

\bibitem{LiMuranovYau2026}
Jingyan Li, Yuri Muranov, and Shing-Tung Yau, \emph{Methods of algebraic
  topology in graph theory}, International Press, 2026.

\bibitem{LinYou2026}
Huiqiu Lin and Zhe You, \emph{Graphs with positive {Lin--Lu--Yau} curvature
  without quadrilaterals}, Calc. Var. Partial Differential Equations
  \textbf{65} (2026), Article 221.

\bibitem{LinLuYau2011}
Yong Lin, Linyuan Lu, and Shing-Tung Yau, \emph{{Ricci} curvature of graphs},
  Tohoku Math. J. (2) \textbf{63} (2011), no.~4, 605--627.

\bibitem{McShane1934}
E.~J. McShane, \emph{Extension of range of functions}, Bull. Amer. Math. Soc.
  \textbf{40} (1934), no.~12, 837--842.

\bibitem{MunchWojciechowski2019}
Florentin M{\"u}nch and Rados{\l}aw~K. Wojciechowski, \emph{{Ollivier Ricci}
  curvature for general graph laplacians: Heat equation, laplacian comparison,
  non-explosion and diameter bounds}, Adv. Math. \textbf{356} (2019), 106759.

\bibitem{Ollivier2009}
Yann Ollivier, \emph{{Ricci} curvature of {Markov} chains on metric spaces}, J.
  Funct. Anal. \textbf{256} (2009), no.~3, 810--864.

\bibitem{Villani2003}
C{\'e}dric Villani, \emph{Topics in optimal transportation}, Graduate Studies
  in Mathematics, vol.~58, American Mathematical Society, Providence, RI, 2003.

\end{thebibliography}

\end{document}